\documentclass[11pt]{amsart}
\usepackage{amsmath,amssymb,mathrsfs,color}


\def\R{{\mathbb R}}
\def\d{{\rm d}}
\newtheorem{lemma}{Lemma}[section]
\newtheorem{theorem}[lemma]{Theorem}
\newtheorem{remark}[lemma]{Remark}

\newtheorem{coro}[lemma]{Corollary}

\newtheorem{example}[lemma]{Example}
\allowdisplaybreaks
\numberwithin{equation}{section}


\topmargin=0pt \oddsidemargin=0pt \evensidemargin=0pt
\textwidth=16cm \textheight=22.0cm \raggedbottom


\begin{document}

\title[Exponential ergocicity for McKean-Vlasov SDEs with switching]
{Existence, uniqueness and exponential ergodicity under Lyapunov conditions for McKean-Vlasov SDEs with Markovian switching}

\author{Zhenxin Liu}
\address{Z. Liu (Corresponding author): School of Mathematical Sciences,
Dalian University of Technology, Dalian 116024, P. R. China}
\email{zxliu@dlut.edu.cn}

\author{Jun Ma}
\address{J. Ma: School of Mathematical Sciences,
Dalian University of Technology, Dalian 116024, P. R. China}
\email{mathmajun@163.com}


\date{January 30, 2022}
\subjclass[2010]{60J60, 60J27, 37H30, 93D23. }
\keywords{McKean-Vlasov SDEs, Markovian switching, Lyapunov condition, invariant measure, exponential ergodicity. }

\begin{abstract}
The paper is dedicated to studying the problem of existence and uniqueness of solutions as well as existence of and exponential convergence to invariant measures for McKean-Vlasov stochastic differential equations with Markovian switching.
Since the coefficients are only locally Lipschitz, we need to truncate them both in space and distribution variables simultaneously
to get the global existence of solutions under the Lyapunov condition. Furthermore, if the Lyapunov condition is strengthened, we establish the exponential convergence of solutions' distributions to the
unique invariant measure in Wasserstein quasi-distance and total variation distance, respectively. Finally, we give two applications to illustrate our theoretical results.
\end{abstract}

\maketitle

\section{Introduction}

Owing to the increasing demands on practical financial markets, ecological systems and social systems, much attention has been drawn to those processes which satisfy the McKean-Vlasov stochastic differential equation (MVSDE) with Markovian switching:
\begin{equation}\label{main}
dX_t= b(t,X_t,\mathcal{L}_{X_t},\alpha_t)dt+ \sigma(t,X_t,\mathcal{L}_{X_t},\alpha_t)dW_t
\end{equation}
and
$$
P(\alpha_{t+ \Delta t}=j | \alpha_t=i, (X_s,  \alpha_s ), s\leq t)
 = q_{ij}(X_t) \Delta t+ o(\Delta t)
$$
for $i\neq j$, where $\mathcal{L}_{X_t}$ denotes the law of $X_t$. A salient feature of such processes is the inclusion of the microcosmic site, the macrocosmic distribution of particles and the discrete event. For instance, the change rate of prices in a financial market may depend on the macrocosmic distribution, and may be very different for different time slots.

When the coefficients are Lipschitz and satisfy the linear growth condition, there are some related works on the stochastic system \eqref{main} with both McKean-Vlasov property (i.e. coefficients depending on the distribution) and Markovian switching property as follows. In \cite{ZSX}, Zhang et al proved the existence and uniqueness of Markov regime switching mean-field type stochastic control systems with state-independent switching in a finite state space. Nguyen et al \cite{NYH} showed that the limit of SDEs with mean-field interactions and Markovian switching is characterized as the stochastic McKean-Vlasov differential equation with Markovian switching in which the distribution term is actually the conditional distribution (given the history of the switching); the diffusion coefficient is assumed to be bounded. Nguyen et al \cite{NYN} obtained existence and uniqueness for conditional-distribution dependent stochastic control systems with state-independent switching in a finite state space.

If $b$ and $\sigma$ do not depend on Markovian switching $\alpha_t$, the equation \eqref{main} is called a McKean-Vlasov SDE or mean-field SDE. Such SDEs are used to study the interacting particle systems and mean-field games. It was first studied by Kac \cite{Kac} in the framework of the Boltzmann equation for the particle density in diluted monatomic gases, as well as in the stochastic toy model for the Vlasov kinetic equation for plasma. In \cite{Mckean}, McKean studied the propagation of chaos in physical systems of $N$-interacting particles related to the Boltzmann equation for the statistical mechanics of rarefied gases. In \cite{Sznitman1,Sznitman2}, Sznitman showed the propagation of chaos and the limit equation in a different framework. The limit equation can be described as an evolution equation known as the aforementioned MVSDE. The solution of a MVSDE is a ``nonlinear" Markov process, whose transition function may not only depends on the current state but also on the current distribution. Due to its importance and reality, the MVSDE is studied extensively. Larsy, Lions \cite{Larsy-Lions1,Larsy-Lions2,Larsy-Lions3} and Huang, Malhame and Caines \cite{HMC1,HMC2} independently introduced mean-field games in order to study large population deterministic and stochastic differential games. Veretennikov \cite{Veretennikov} obtained the existence and uniqueness of invariant measures and weak convergence to invariant measures for McKean-Vlasov SDEs with additive noise. Butkovsky \cite{Butkovsky} considered ergodic properties of nonlinear Markov chains and McKean-Vlasov equations with additive noise. Buckdahn et al \cite{BLPR} established the relationship between the functionals of the form $E f(t,X_t,\mathcal L_{X_t})$ and the associated second-order PDE, involving derivatives with respect to (w.r.t. in short) the law. In \cite{wang_2018}, Wang showed the well-posedness, existence and uniqueness of invariant measures under monotone conditions. Bogachev et al \cite{Bogachev-Rockner-Shaposhnikov} obtained convergence in variation of probability measure solutions to stationary measures of nonlinear Fokker-Planck equations. Mishura and Veretennikov \cite{MV} established weak and strong existence/uniqueness results for solutions of multi-dimensional MVSDEs under relaxed regularity conditions. Barbu and R\"ockner \cite{Barbu-Rockner} got the existence of weak solutions to MVSDEs using the superposition principle. Song \cite{Song} studied exponential ergodicity for MVSDEs with jumps. In \cite{RTW}, Ren et al proved the existence and uniqueness of solutions in infinite dimension under a Lyapunov condition (different from ours in the present paper).

Along another line, if $b$ and $\sigma$ do not depend on the distribution $\mathcal{L}_{X_t}$, then \eqref{main} reduces to the so-called switching diffusion system, also known as hybrid switching system, which has gained increasing popularity because of its ability to handle numerous real-world applications in which continuous and discrete dynamics coexist and interact. The behavior of a diffusion process in different environments may be very different. Thus, it can provide more opportunity for realistic models. For instance, the work \cite{BR} of Barone-Adesi and Whaley is one of the early efforts using switching processes for financial applications, and in \cite{YKI} an optimization problem leads to switching diffusion limits under suitable conditions. Yin and his cooperators have systematically studied switching diffusions, such as regularity, Feller property, recurrence, ergodicity and numerical approximation, see e.g. \cite{YZ,ZY_2009}. In \cite{YZ}, they established the existence and uniqueness of solutions under the Lipschitz condition and Lyapunov condition respectively, and ergodicity using cycles and induced Markov chains which is similar to the classical situation. Cloez and Hairer \cite{CM} proved the ergodicity with state-dependent switching in a finite state space, using the weak form of Harris' Theorem (Hairer et al \cite{HMS}). In \cite{Shao1} Shao obtained the ergodicity with state-independent switching in both finite and infinite state spaces, and in \cite{Shao2} he got the existence and uniqueness of strong solutions with switching in an infinite state space.

The main purpose of our paper is to investigate the existence and uniqueness of solutions as well as exponential ergodicity for the equation \eqref{main}, which we derive under Lyapunov type conditions in a unified way. Since the coefficients depend on the distribution of solutions which is a global property, the classical truncation in the space variable does not work in this situation. Following Ren et al \cite{RTW}, we need to truncate the equation in both space and distribution variables to overcome this difficulty. For the existence of and convergence to invariant measures, we do not appeal to the streamlined method of Hairer and Mattingly \cite{HM} which works for the convergence in Wasserstein distance as well as total variation distance and is now widely adopted such as in Bogachev et al \cite{Bogachev-Rockner-Shaposhnikov}, Wang \cite{Wang}. Instead, we use Lyapunov function itself to achieve the same goal, which we think is simple and interesting in its own right, and also consistent with our Lyapunov function method throughout the paper. Our method works also for the convergence in total variation distance, but the price we pay is that the convergence only works in Wasserstein quasi-distance instead of Wasserstein distance in \cite{HM}; see the comment following (H5) in Section 4 for details.

The rest of this paper is arranged as follows. In Section 2, we collect a number of preliminary results concerning switching, transition semigroup and optimal transportation cost. Section 3 presents existence and uniqueness under the Lyapunov condition. Section 4 establishes exponential convergence to invariant measures under the condition of integrable Lyapunov function, both in Wasserstein quasi-distance and weighted total variation distance. In Section 5, we provide two examples to illustrate our theoretical results.

\section{Preliminary}

Throughout the paper, let $(\Omega, \mathcal F, \{\mathcal F_t\}_{t\ge 0},P)$ be a filtered complete probability space. We assume that the filtration $\{\mathcal F_t\}$ satisfies the usual condition, i.e. it is right continuous and $\mathcal F_0$ contains all $P$-null sets. Let $W$ be an $n$-dimensional Brownian motion defined in $(\Omega, \mathcal F, \{\mathcal F_t\}_{t\ge 0}, P)$.  We denote by $B^\top$ the transpose of matrix $B\in \mathbb R^{n_1\times n_2}$ with $n_1,n_2\geq1$, $tr(B)$ the trace of $B$ and $|B|:=\sqrt {tr(B^\top B)}$ the norm of $B$. Suppose that $\alpha$ is a stochastic process with right-continuous sample paths, taking values in a finite set $\mathcal M = \{ 1,2,\cdots, m \}$, and having $x$-dependent generator $Q=(q_{i,j}): \mathbb R^d \to \mathbb R^{m \times m}$ such that for a suitable function $V(\cdot, \cdot)$,
\begin{align*}
Q(x) V(x, \cdot)(i)= \sum_{j\in \mathcal M} q_{ij}(x)V(x, j)
=\sum_{j\in \mathcal M,j\neq i} q_{ij}(x)(V(x, j)- V(x, i)), ~ x\in \mathbb R^d, ~i\in \mathcal M .
\end{align*}
We say $Q$ satisfies the $q$-property, if $q_{i,j}(\cdot)$ is Borel measurable, uniformly bounded, $q_{i,j}(x)\geq 0$ for $j\neq i$ and $q_{i,i}(x)= -\sum_{j\neq i}q_{i,j}(x)$ for all $i,j \in \mathcal M$ and $x\in \mathbb R^d$. Assume that $(X_t, \mathcal L_{X_t}, \alpha_t)_{t\ge 0}$ is a triplet such that $X_t$ is a continuous component taking values in $\mathbb R^d$, $\mathcal L_{X_t}$ denotes the distribution of $X_t$ taking values in $\mathcal P (\mathbb R^d)$ and $\alpha_t$ is a jump component taking values in $\mathcal M$, where $\mathcal P (\mathbb R^d)$ is the space of probability measures on $\mathbb R^d$. The process $(X_t, \alpha_t)$ can be described by the following MVSDE with switching:
\begin{align}
\left\{
\begin{aligned}\label{eqSDE}
dX_t&= b(t,X_t,\mathcal{L}_{X_t},\alpha_t)dt+ \sigma(t,X_t,\mathcal{L}_{X_t},\alpha_t)dW_t\\
X_0&= \xi, \alpha_0= \zeta,
\end{aligned}
\right.
\end{align}
and for $i \neq j$,
\begin{align}\label{eqPofS}
P(\alpha_{t+ \Delta t}=j | \alpha_{t}=i, (X_s, \alpha_s ), s\leq t)= q_{ij}(X_t) \Delta t+ o(\Delta t),
\end{align}
where $b:[0,\infty) \times \mathbb R^d \times \mathcal P (\mathbb R ^d) \times \mathcal{M} \to \mathbb R^d$, $\sigma:[0,\infty) \times \mathbb R^d \times \mathcal P (\mathbb R ^d) \times \mathcal{M} \to \mathbb R^{d\times n}$, and
$\xi$ is $\mathcal F_0$-measurable and satisfies some integrable condition to be specified below.
The MVSDE has a generator $ L$ given as follows. For each $i\in \mathcal M$ and any twice continuously differentiable function $V(\cdot, i)$,
$$
L V(x, i)= \frac{1}{2}tr(\sigma \sigma^\top \nabla ^2 V(x, i))+ b(t, x, \mu, i)\nabla V(x, i) + Q(x) V(x, \cdot)(i)
$$
where $x\in \mathbb R^d$, $\nabla ^2 V(\cdot, i)$ and $\nabla V(\cdot, i)$ denote the Hessian and gradient of $V(\cdot, i)$ respectively.

Note that the evolution of the discrete component $\alpha$ can be represented as a stochastic integral with respect to a Poisson random measure. Indeed, for any $x\in \mathbb R^d$ and $i,j \in \mathcal M$ with $i\neq j$, let $\Delta _{ij}(x)$ be consecutive (w.r.t. the lexicographic ordering on $\mathcal M \times \mathcal M$), left closed, right open interval of the real line, each having length $q_{ij}(x)$. Define a function $h: \mathbb R^d \times \mathcal M \times \mathbb R \to \mathbb R$ by
$$
h(x, i, z):=\left\{
\begin{aligned}
& j-i, && {z\in \Delta _{ij}(x)},\\
& 0, && {\rm else}.
\end{aligned}
\right.
$$
Then it is equivalent to
$$
d\alpha_t= \int_{\mathbb R} h(X_t, \alpha_{t-}, z)p(dt,dz),
$$
where $p(dt,dz)$ is a Poisson random measure with intensity $dt\times \tilde m (dz)$; here $\tilde m$ is the Lebesgue measure on $\mathbb R$. The poisson random measure $p(\cdot, \cdot)$ is independent of the Brownian motion $W$.

The generalized It\^o's formula reads
\begin{align*}
V(X_t,\alpha_t)- V(X_0,\alpha_0)= \int_0^t L V(X_s,\alpha_s)ds+ M_1(t)+ M_2(t),
\end{align*}
where
\begin{align*}
M_1(t)&= \int_0^t \langle \nabla V(X_s,\alpha_s), \sigma(s,X_s,\mathcal{L}_{X_s},\alpha_s)dW_s\rangle,\\
M_2(t)&= \int_0^t \int_{\mathbb R} V(X_s,\alpha_0+ h(X_s, \alpha_{s-}, z))- V(X_s,\alpha_s) \mu(ds,dz),
\end{align*}
and
\begin{align*}
\mu(ds,dz)= p(ds,dz)- ds\times \tilde m(dz).
\end{align*}

When strong existence and uniqueness of solutions holds for \eqref{eqSDE}-\eqref{eqPofS}, the solution $(X_t, \alpha_t)_{t\geq s}$ is a Markov process which is determined by solving the equation from $s$ with initial value $(X_s, \alpha_s)$. More precisely, denote by $\{X_{s,t}^{\alpha}(\xi)\}_{t\geq s}$ the solution of the equation from $s$ with initial value $X_{s,s}= \xi, \alpha_s= \alpha$, then the uniqueness implies
\begin{equation}\label{equniqueness}
X_{s,t}^{\alpha}(\xi)= X^{\alpha_r}_{r,t}(X_{s,r}^{\alpha}(\xi)), ~ 0\leq s \leq r \leq t.
\end{equation}
However, in general, the solution is not strong Markovian because we do not have $\mathcal L _{X_{\tau}}=\mathcal L _{X_t}$ on the set $\{\tau =t\}$ for a stopping time $\tau$ and $t\geq 0$. Moreover, the associated Markov operator $P_t$ given by
$$
P_t f(x, \alpha):=  E f(X_t ^{\alpha}(x), \alpha(t)), \quad \alpha \in \mathcal M,x\in \mathbb R^d,f\in \mathscr B_b(\mathbb R^d \times \mathcal M)
$$
is not a semigroup, where $\mathscr B_b(\mathbb R^d \times \mathcal M)$ denotes the space of bounded measurable functions on $\mathbb R^d \times \mathcal M$.

We will consider solutions of \eqref{eqSDE}--\eqref{eqPofS} with some integrable conditions, so let us introduce some basic notations as follows. Let $\mathcal P(\mathbb R^d \times \mathcal M)$ be the space of probability measures on $\mathbb R^d \times \mathcal M$, and $\rho:(\mathbb R^d \times \mathcal M) \times (\mathbb R^d \times \mathcal M) \to \mathbb R^+$ be a distance-like function satisfying $\rho((x,i),(y,j))=0$ if and only if $x=y,~i=j$.  Denote $\mathcal P_{\rho}:= \{\mu\in \mathcal P(\mathbb R^d \times \mathcal M): \int_{\mathbb R^d\times\mathcal M}\rho((x,i),(0,1))\mu(dx\times\{i\})<\infty\}$. If the weak uniqueness holds for \eqref{eqSDE}--\eqref{eqPofS} in $\mathcal P_{\rho}$, we may define a semigroup $P_{s,t}^*$ on $\mathcal P_{\rho}$ by letting $P_{s,t}^{*} \mu:= \mathcal L_{\{X_{s,t}, \alpha_{s,t}\}}$ for $\mathcal L_{\{X_s,\alpha_s\}}= \mu$. Then we have
$$
P_{s,t}^*= P_{r,t}^*P_{s,r}^* \quad \hbox{for }  0\leq s\leq r\leq t.
$$
Note that the semigroup $P_{s,t}^*$ is nonlinear, i.e.
\begin{align*}
P^*_{s,t}\mu \neq \int_{\mathbb R^d\times \mathcal M}(P_{s,t}^*\delta_{x,i})\mu(dx\times\{i\}), \quad 0\le s \le t.
\end{align*}
In the time homogeneous case, i.e. $b$ and $\sigma$ do not depend on $t$, we have $P^*_{s,t}= P^{*}_{t-s}$ for $0\leq s\leq t$. A measure $\mu \in \mathcal P_{\rho}$ is said to be invariant measure of $P_{t}^*$ if $P_{t}^* \mu= \mu$ for all $t\geq 0$, and the equation is said to be ergodic if there exists $\mu_{\mathcal I} \in \mathcal P_{\rho}$ such that $\lim_{t\to \infty}P_{t}^* \nu= \mu_{\mathcal I}$ weakly for any $\nu \in \mathcal P_\rho$. It is obvious that ergodicity implies uniqueness of invariant measures.

We now introduce the Wasserstein quasi-distance based on $\rho$. For any $\mu, \nu \in \mathcal P_{\rho}$, let
\begin{align*}
W_{\rho}(\mu, \nu):=\inf_{\pi \in \mathcal C(\mu, \nu)} \int_{(\mathbb R^d\times\mathcal M) \times (\mathbb R^d\times\mathcal M)}\rho((x,i), (y,j)) \pi(dx\times \{i\}, dy\times \{j\})=\inf E \rho(X, Y),
\end{align*}
where $\mathcal C(\mu, \nu)$ is the set of couplings between $\mu$ and $\nu$, and the second infimum is taken over all random variables $X, Y$ on $\mathbb R^d \times \mathcal M$ whose laws are $\mu, \nu$ respectively. In general, $W_{\rho}$ is not a distance because the triangle inequality may not hold. But it is complete in the sense that any $W_{\rho}$--Cauchy sequence in $\mathcal P_{\rho}$ is convergent, i.e. for any Cauchy sequence $\{\mu_n\} \subset \mathcal P_{\rho}$, there exists a measure $\mu \in \mathcal P_{\rho}$ such that $W_{\rho}(\mu_n, \mu) \to 0$ as $n\to \infty$.
When $\rho$ is a distance on $\mathbb R^d \times \mathcal M$, $W_{\rho}$ satisfies the triangle inequality and is hence a distance on $\mathcal P_{\rho}$.

We also use the usual Wasserstein distance $W_p$ on $\mathcal P_p(\R^d)$ with $p=1,2$ in what follows, i.e. $\mathcal P_p(\R^d):=\{\mu\in\mathcal P(\R^d): \int_{\R^d}|x|^p \mu(dx)<\infty \}$ and $W_p(\mu, \nu):=\inf_{\pi \in \mathcal C(\mu, \nu)} [\int_{\mathbb R^d\times\R^d} |x-y|^p \pi(dx, dy)]^{1/p}$ for $\mu, \nu\in \mathcal P_p(\R^d)$. This should not cause confusion with $W_\rho$ and $\mathcal P_\rho$ introduced above. As usual, we also denote $\mu(f):=\int_{\R^d} f(x)\mu(dx)$ in what follows for any function $f$ defined on $\R^d$ and $\mu\in\mathcal P(\R^d)$.

\section{Existence and uniqueness of solutions}

In this section, we consider the existence and uniqueness of the equation \eqref{eqSDE}--\eqref{eqPofS} under the Lyapunov function condition. Firstly, we consider the existence and uniqueness under Lipschitz and linear growth conditions.

\begin{theorem}\label{thMo1}
Suppose that $b:[0,\infty) \times \mathbb R^d \times \mathcal P (\mathbb R ^d) \times \mathcal{M} \to \mathbb R^d$, and $\sigma:[0,\infty) \times \mathbb R^d \times \mathcal P (\mathbb R^d) \times \mathcal{M} \to \mathbb R^{d\times n}$ are measurable, and satisfy the following conditions: for each $ t \in [0,\infty),\alpha \in \mathcal M,  x,y \in \mathbb R^d, \mu, \nu \in \mathcal P _2(\mathbb R ^d)$, there exist constants $ K,L>0 $ such that
\begin{align*}
|b(t, x, \mu, \alpha)- b(t, y, \nu, \alpha)| & \leq L\left(| x- y |  + W_2\left(\mu,\nu\right)\right),\\
| \sigma(t, x, \mu, \alpha)- \sigma(t, y, \nu, \alpha)| &\leq L\left(| x- y |  + W_2\left(\mu,\nu\right)\right),\\
| b(t, x, \mu, \alpha)| + | \sigma (t, x, \mu, \alpha)|
& \leq K \left( 1 +| x |  +\left(\mu \left(| \cdot | ^2\right)\right)^{\frac{1}{2}} \right).
\end{align*}
The generator $Q= (q_{i,j}): \mathbb R^d \to \mathbb R^{m\times m}$ is a bounded, continuous function and satisfy the $q$-property. Then for any $ T>0 $, $\alpha \in \mathcal M$ and $X_0 \in L^2(\Omega, \mathcal F_0,P)$,
\eqref{eqSDE} has a unique solution $(X_t, \alpha_t )$ with the given initial data in which the evolution of the jump process $\alpha_t$ is specified by \eqref{eqPofS} and $X_t$ satisfies
$$
 E \sup _{0 \leq t \leq T} | X_t |^2 < \infty.
$$
\end{theorem}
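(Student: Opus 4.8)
The plan is to combine the classical Sznitman fixed-point scheme for McKean--Vlasov equations with the Poisson-random-measure description of the switching recorded above. First I would \emph{freeze the marginal law}: fix a flow $\mu=(\mu_t)_{t\in[0,T]}\in C([0,T];\mathcal P_2(\mathbb R^d))$ and consider the auxiliary equation obtained by replacing $\mathcal L_{X_t}$ with the deterministic $\mu_t$,
\begin{align*}
dX_t=b(t,X_t,\mu_t,\alpha_t)\,dt+\sigma(t,X_t,\mu_t,\alpha_t)\,dW_t,\qquad d\alpha_t=\int_{\mathbb R}h(X_t,\alpha_{t-},z)\,p(dt,dz),
\end{align*}
with $X_0=\xi,\ \alpha_0=\alpha$. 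Since $\mu_t$ is now a known parameter, the coefficients are globally Lipschitz and of linear growth in $x$ uniformly in $\alpha\in\mathcal M$, and $Q$ is bounded; this is a standard switching diffusion, whose unique strong solution $(X^\mu,\alpha^\mu)$ I would construct by interlacing the continuous dynamics between the (a.s. finitely many, because $Q$ is bounded) switching times produced by $p$, solving an ordinary Lipschitz SDE on each interval and updating $\alpha$ at the jumps via $h$. Setting $\Phi(\mu)_t:=\mathcal L_{X^\mu_t}$ then defines the map whose fixed point is the sought solution.

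Next I would verify that $\Phi$ maps $C([0,T];\mathcal P_2)$ into itself together with the a priori bound. Applying the generalized It\^o formula to $|X^\mu_t|^2$, using the linear growth hypothesis, the Burkholder--Davis--Gundy inequality for $M_1$ and the boundedness of $Q$ to control the compensated jump term $M_2$, and then Gronwall's inequality, gives $E\sup_{0\le t\le T}|X^\mu_t|^2<\infty$; this is exactly the bound claimed in the theorem and also shows that $t\mapsto\mathcal L_{X^\mu_t}$ is $W_2$-continuous and lies in $\mathcal P_2$.

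The heart of the matter is the contraction estimate. Given two flows $\mu,\nu$, I would realize $X^\mu$ and $X^\nu$ on the same $W$ and the same Poisson measure $p$ with the same initial data, apply It\^o to $|X^\mu_t-X^\nu_t|^2$, and use the Lipschitz conditions to absorb the drift and diffusion increments into $C\int_0^t\big(E|X^\mu_s-X^\nu_s|^2+W_2(\mu_s,\nu_s)^2\big)\,ds$, where the elementary bound $W_2(\mathcal L_{X^\mu_s},\mathcal L_{X^\nu_s})^2\le E|X^\mu_s-X^\nu_s|^2$ is used on the law slot. Together with Gronwall this yields $W_2(\Phi(\mu)_t,\Phi(\nu)_t)^2\le C\int_0^t W_2(\mu_s,\nu_s)^2\,ds$, and iterating this kernel inequality makes a sufficiently high power of $\Phi$ a contraction on $C([0,T];\mathcal P_2)$; its unique fixed point is the unique solution, pathwise uniqueness following from the same same-noise estimate.

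The step I expect to be the main obstacle is precisely the control of the switching inside this last estimate. Because the jump rates $q_{ij}$ depend on the state, the coupled paths $\alpha^\mu$ and $\alpha^\nu$ need not coincide even though they are driven by the same $p$, and on the random set $\{\alpha^\mu_s\neq\alpha^\nu_s\}$ the difference $b(s,\cdot,\cdot,\alpha^\mu_s)-b(s,\cdot,\cdot,\alpha^\nu_s)$ is of order one rather than Lipschitz-small. The delicate point is therefore to bound $P(\alpha^\mu_s\neq\alpha^\nu_s)$ by the integrated discrepancy of the intensities, i.e. by $E\int_0^s\sum_{i\neq j}\mathrm{Leb}\big(\Delta_{ij}(X^\mu_r)\triangle\Delta_{ij}(X^\nu_r)\big)\,dr$, and to feed this back into the Gronwall loop; here the boundedness of $Q$ and the explicit interval representation $\Delta_{ij}(\cdot)$ of the switching are what make the argument close.
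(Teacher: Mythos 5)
Your overall strategy --- freeze the marginal law, solve the resulting classical switching diffusion by interlacing, and iterate the map $\Phi:\mu\mapsto\mathcal L_{X^\mu}$ --- is exactly the paper's: its Picard iterates $\mu^n=\mathcal L_{X^n}$ are precisely the orbit of your map $\Phi$ started at $\mathcal L_{X_0}$, and the paper, too, only outlines this step. Your a priori moment bound is fine; in fact, since $V(x)=|x|^2$ does not depend on the switching component, the compensated jump martingale $M_2$ vanishes identically and only BDG on the stochastic integral is needed.

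The genuine gap is in the contraction step, exactly at the point you flag, and your proposed fix does not close it under the stated hypotheses. First, on the event $\{\alpha^\mu_s\neq\alpha^\nu_s\}$ the coefficient gap is of linear growth in the states, not bounded, so estimating $E\bigl[(1+|X^\mu_s|^2+|X^\nu_s|^2)\mathbb 1_{\{\alpha^\mu_s\neq\alpha^\nu_s\}}\bigr]$ forces Cauchy--Schwarz, hence fourth moments (only $X_0\in L^2$ is assumed) and produces $\sqrt{P(\alpha^\mu_s\neq\alpha^\nu_s)}$ rather than $P(\alpha^\mu_s\neq\alpha^\nu_s)$. Second, and more fundamentally, the theorem assumes $Q$ is only bounded and \emph{continuous}, so your bound $P(\alpha^\mu_s\neq\alpha^\nu_s)\le E\int_0^s\sum_{i\neq j}\mathrm{Leb}\bigl(\Delta_{ij}(X^\mu_r)\triangle\Delta_{ij}(X^\nu_r)\bigr)\,dr$ is controlled only by a modulus of continuity of the $q_{ij}$ evaluated at $|X^\mu_r-X^\nu_r|$, not by anything linear or quadratic in that difference. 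Feeding this back produces a nonlinear (Osgood-type) kernel, so the claimed inequality $W_2(\Phi(\mu)_t,\Phi(\nu)_t)^2\le C\int_0^t W_2(\mu_s,\nu_s)^2\,ds$ is not obtained and no power of $\Phi$ is shown to be a contraction; your scheme would close if the $q_{ij}$ were Lipschitz, but that is not assumed. The paper sidesteps this in two different ways. For uniqueness it never compares two solutions with different switchings via Gronwall: it shows (the first-disagreement-time argument carried out in the proof of Theorem~\ref{thlL1}) that two solutions driven by the same Brownian motion \emph{and} the same Poisson random measure have a.s. identical switching paths --- up to the first disagreement the continuous components, hence the rates $q_{ij}(X)$, coincide --- so only the ``same switching'' Lipschitz--Gronwall estimate is ever used. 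For existence, the different-switching terms in the Cauchy estimate for the iterates are handled not through a Gronwall kernel but through the time-partition plus basic-coupling-of-$Q$-matrices device (step 4 of the proof of Theorem~\ref{thlL1}), which yields additive errors tending to zero and a qualitative, rather than geometric-rate, convergence of the iterates. You should either add the Lipschitz-in-$x$ assumption on $Q$ to make your quantitative contraction honest, or replace the contraction step by one of these two devices.
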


\begin{proof}
This result is known well. So for brevity we only outline main steps.

1. Uniqueness. Suppose $(X_t, \mathcal L_{X_t}, \alpha_t)$ and $(Y_t, \mathcal L_{Y_t}, \tilde\alpha_t)$ are solutions. If $\alpha_t=\tilde \alpha_t~ a.s.$, the uniqueness follows from It\^o's formula and Gronwall's inequality since the coefficients are Lipschitz. Otherwise, define $\tau :=\inf\{t\geq 0: \alpha_t= \tilde\alpha_t\}$. We can prove $\tau= \infty~ a.s.$ This proof is similar to the forthcoming Theorem \ref{thlL1} so we omit it.

2. Existence. Let $X^0_t= X_0,~ \mu^0_t= \mathcal L_{X_0}$. For any $n \geq 1$, let $X^n_t$ solve the SDE with Markovian switching
\begin{equation*}
\left\{
\begin{aligned}
\d X^n_t&= b(t,X^n_t,\mu^{n-1}_t,\alpha^{n}_t)dt+ \sigma(t,X^n_t,\mu^{n-1}_t,\alpha^{n}_t)dW_t\\
X^n_0 &= X_0, \alpha^{n}_0= \alpha \\
\end{aligned}
\right.
\end{equation*}
and for $ i\neq j$,
\begin{align*}
P(\alpha^{n} _{t+ \Delta t}=j | \alpha^{n}_t=i, (X^n_s, \alpha^{n}_s ), s\leq t)= q_{ij}(X^{n}_t) \Delta t+ o(\Delta t).
\end{align*}
As the coefficients are Lipschitz and satisfy the linear growth condition, we can prove that $E\sup_{0\leq t\leq T}|X^n_t|^2 < \infty$ and $\{ X^n_t\}$ is a Cauchy sequence, and hence has a limit $X_t$ in the space $C([0,T])$ as $n\to \infty$, which is a solution.
\end{proof}

Now we introduce some assumptions for the equation \eqref{eqSDE}--\eqref{eqPofS}.
\begin{enumerate}

\item [(H1)]For any $N\geq 1$, $\alpha \in \mathcal M$, there exists a constant $ C_N\geq 0 $ such that for any $ |x|, |y|\leq N$ and  ${\rm supp} \mu, {\rm supp} \nu \subset B(0,N)$ we have
\begin{align*}
&|b(t, x, \mu, \alpha) |+ |\sigma(t, x, \mu, \alpha)| \leq C_N,\\
&| b(t, x, \mu, \alpha)- b(t, y, \nu, \alpha) |+ |\sigma(t, x, \mu, \alpha)- \sigma(t, y, \nu, \alpha)| \leq  C_N(| x- y | + W_2(\mu,\nu)).
\end{align*}
Here $B(0,N)$ denotes the closed ball in $\R^d$ centered at the origin with radius $N$.

\item [(H2)](Lyapunov function)
There exists a function $ V:\mathbb R^d \times \mathcal M \to \mathbb R^{+}$ that is twice  continuously differentiable with respect to $x\in \mathbb R^d$ for each $i\in \mathcal M $ such that there exist constants $ \lambda_1,\lambda_2  \in \mathbb R $ satisfying for all $(t,x,\mu, i)\in [0,\infty)\times\mathbb R^d \times \mathcal P(\R^d)\times \mathcal M$
\begin{align*}
(LV)(t, x,\mu, i)  &\leq  \lambda_1 V(x, i) + \lambda_2 \int _{\mathbb R^d} \varphi(x)\mu(dx), \\
V_R&:= \inf_{|x|\geq R, i\in \mathcal M}V(x, i)\to \infty ~as~ R\to \infty,
\end{align*}
where function $\varphi: \mathbb R^d \to \mathbb R^+$ satisfying $\varphi(x)\leq V(x,i)$ for all $x\in \mathbb R^d, i\in \mathcal M$.

\item [(H3)](Continuity) For any $\alpha \in \mathcal M$ and bounded sequences $ \{{x_n, \mu_n}\} \in \mathbb R^d \times \mathcal P_V (\mathbb R^d) $ with $ x_n \to x $ and $\mu_n \to \mu$ weakly in $\mathcal P (\mathbb R^d)$ as $ n\to \infty $, we have
\begin{equation}\nonumber
\lim_{n\to \infty} \sup_{t\in [0,T]}{ | b(t, x_n, \mu _n, \alpha)- b(t, x, \mu, \alpha) |+ |(\sigma(t, x_n, \mu _n, \alpha)- \sigma(t, x, \mu, \alpha)|}= 0.
\end{equation}
where $ \mathcal P_V (\mathbb R^d):= \left\{ \mu \in \mathcal P(\mathbb R^d):\int_{\mathbb R^d} V(x,i)\mu(dx) <\infty, \forall i\in \mathcal M \right\}$.

\item [(H4)]There exist constants $ K,\epsilon >0 $ and increasing unbounded function $L: \mathbb N \to (0,\infty)$ such that for any $ \alpha\in \mathcal M,~N\geq 1,|x|\vee |y| \leq N $ and $  \mu,\nu \in \mathcal P (\mathbb R ^d ) $ satisfying
\begin{align*}
&| b(t, x, \mu, \alpha)- b(t, y, \nu, \alpha) |+ |\sigma(t, x, \mu, \alpha)- \sigma(t, y, \nu, \alpha)|\\
&\leq L_N(| x- y |+ W_{2,N}(\mu , \nu)+ Ke^{-\epsilon L_N}(1\wedge W_2(\mu,\nu))),
\end{align*}
where
\begin{align*}
W_{2,N}^2(\mu,\nu):= \inf_{\pi \in \mathcal C(\mu,\nu)} \int_{\mathbb R^d \times \mathbb R^d}|\phi_N(x) -\phi_N(y)|^2 \pi(dx,dy) ,
\phi_N (x):= \frac{Nx}{N\vee |x|}.
\end{align*}
\end{enumerate}

\begin{remark}\rm
If the function $L:\mathbb N\to (0,\infty)$ in (H4) is bounded, i.e. $b$ and $\sigma$ are globally Lipschitz, then $K$ should be $0$. This then reduces to the the case of Theorem \ref{thMo1}, so we assume that the function
$L$ is unbounded in (H4).
\end{remark}

\begin{theorem}\label{thlL1}
Assume (H1)--(H3). Then for any $ T> 0$, $ X_0 \in L^2(\Omega, \mathcal{F}_0, P)$ and $\alpha_0 \in \mathcal M$, \eqref{eqSDE}--\eqref{eqPofS} has a solution $(X_{\cdot},\alpha_{\cdot} )$ which satisfies
$$
E V( X_t, \alpha_t ) \leq e^{(\lambda_1 + \lambda_2) t}E V( X_0, \alpha_0 ), \quad \hbox{for } t\ge 0.
$$
Moreover, if (H4) holds, then the solution is unique.
\end{theorem}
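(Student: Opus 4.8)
The proof separates into existence (together with the Lyapunov estimate) and uniqueness, which I would treat by quite different devices. For existence, the plan is to truncate the coefficients \emph{simultaneously} in the space and distribution variables, the device forced on us by the McKean--Vlasov dependence. Using the map $\phi_N(x)=Nx/(N\vee|x|)$ already appearing in (H4), I set $b_N(t,x,\mu,\alpha):=b(t,\phi_N(x),(\phi_N)_\#\mu,\alpha)$ and define $\sigma_N$ analogously, where $(\phi_N)_\#\mu$ is the push-forward of $\mu$ under $\phi_N$. By (H1) these truncated coefficients are globally Lipschitz with linear growth, so Theorem \ref{thMo1} provides a unique solution $(X^N_\cdot,\alpha^N_\cdot)$ of the truncated system with $E\sup_{[0,T]}|X^N_t|^2<\infty$. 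Since $\phi_N$ is the identity on $B(0,N)$, the truncated generator agrees with $L$ whenever $|x|\le N$ and $\mathrm{supp}\,\mu\subset B(0,N)$, so the Lyapunov inequality in (H2) remains available for the a priori estimate.

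Next I would apply the generalized It\^o formula to $V(X^N_t,\alpha^N_t)$, localize so that the martingale parts $M_1,M_2$ are mean-zero, take expectations, and insert (H2). The key algebraic step is the pointwise inequality $\varphi\le V$ from (H2), which lets me absorb the distribution term $\lambda_2\int\varphi\,d\mathcal L_{X^N_s}$ into $\lambda_2 EV(X^N_s,\alpha^N_s)$ and hence obtain the combined exponent $\lambda_1+\lambda_2$:
$$
E V(X^N_t,\alpha^N_t)\le EV(X_0,\alpha_0)+(\lambda_1+\lambda_2)\int_0^t EV(X^N_s,\alpha^N_s)\,ds.
$$
Gronwall's inequality then yields $EV(X^N_t,\alpha^N_t)\le e^{(\lambda_1+\lambda_2)t}EV(X_0,\alpha_0)$, uniformly in $N$. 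Non-explosion follows from $V_R\to\infty$: with $\tau_N:=\inf\{t:|X^N_t|\ge N\}$, on $\{\tau_N\le t\}$ one has $V(X^N_{\tau_N},\cdot)\ge V_N$, whence $P(\tau_N\le t)\le e^{(\lambda_1+\lambda_2)t}EV(X_0,\alpha_0)/V_N\to 0$. This uniform control gives tightness of $\{(X^N,\alpha^N)\}$, and using (H3) to pass the coefficients to the limit along a subsequence I identify the limit $(X_\cdot,\alpha_\cdot)$ as a solution of \eqref{eqSDE}--\eqref{eqPofS}, the Lyapunov bound being inherited by Fatou's lemma.

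For uniqueness under (H4), let $(X,\alpha)$ and $(Y,\tilde\alpha)$ be two solutions driven by the same $W$ and the same Poisson measure $p$. The switching parts coincide as long as the continuous parts do, since $\alpha$ and $\tilde\alpha$ can split only when $p$ falls in the symmetric difference of the jump intervals $\Delta_{ij}(X_t)$ and $\Delta_{ij}(Y_t)$, a set whose size is controlled by $|X_t-Y_t|$ through the regularity of $Q$; this desynchronization probability is carried alongside the continuous estimate. Localizing at $\tau_N:=\inf\{t:|X_t|\vee|Y_t|\ge N\}$ and applying It\^o to $|X_{t\wedge\tau_N}-Y_{t\wedge\tau_N}|^2$ with (H4), and noting that $\phi_N$ is $1$-Lipschitz so that $W_{2,N}(\mathcal L_{X_s},\mathcal L_{Y_s})^2\le E|X_s-Y_s|^2$, the first two terms of (H4) close into $CL_N^2\int_0^t E|X_s-Y_s|^2\,ds$, while the third contributes a coefficient $\le C(L_Ne^{-\epsilon L_N})^2$. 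I expect the heart of the difficulty to lie precisely here: the local Lipschitz constant $L_N$ is \emph{unbounded}, so a naive Gronwall bound $\exp(CL_N^2t)$ would blow up before one can send $N\to\infty$. The phrasing of (H4) is designed to defeat this---the genuinely local discrepancies are measured through the truncated distance $W_{2,N}$, which is dominated by $E|X_s-Y_s|^2$ and hence introduces no extra distribution error, whereas the full Wasserstein distance $W_2$, the only quantity sensitive to the mass of $X,Y$ outside $B(0,N)$, enters solely through the damping factor $Ke^{-\epsilon L_N}$ whose product with $L_N$ vanishes as $N\to\infty$. Balancing the localization error against this exponential damping, so that the estimate survives the limit $N\to\infty$ and forces both $X=Y$ and $\tau=\infty$, is the main obstacle, and it is exactly where the McKean--Vlasov feature interacts nontrivially with the unbounded Lipschitz growth.
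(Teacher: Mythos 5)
Your overall strategy matches the paper's (truncation via $\phi_N$ simultaneously in space and distribution, Theorem \ref{thMo1} for the truncated system, It\^o plus (H2) and Gronwall for the a priori bound and non-explosion, then a limiting procedure; localization plus (H4) for uniqueness), but there are two genuine gaps, both located exactly where the real work lies. For existence, ``tightness of $\{(X^N,\alpha^N)\}$ and passing to the limit along a subsequence'' is not sufficient, and it is precisely the step on which the paper spends most of its proof. Tightness of laws only yields weak subsequential limits; to produce a solution driven by the given $W$ and obeying the state-dependent switching law \eqref{eqPofS}, the paper proves that $\{X^n\}$ is Cauchy \emph{in probability, uniformly on $[0,T]$}. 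The delicate point is that two approximants $X^n$ and $X^m$ carry \emph{different} switching processes $\alpha^n$ and $\alpha^m$ (generated by $Q(X^n_\cdot)$ and $Q(X^m_\cdot)$ respectively), so the difference of coefficients contains terms like $b(r,\cdot,\cdot,\alpha^n_r)-b(r,\cdot,\cdot,\alpha^m_r)$ that cannot be controlled by $|X^n_r-X^m_r|$ or any Wasserstein distance; the paper handles them by partitioning $[0,T]$ into small intervals and using the basic coupling $\tilde Q(x_1,x_2)$ of the two generators to bound the probability of desynchronization on each interval. Your sketch never addresses how the switching components are compared or converge, so the limit identification --- the heart of the existence proof in the switching setting --- is missing.

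For uniqueness, you correctly identify the tension between the Gronwall factor and the damping $Ke^{-\epsilon L_N}$, but you then declare that balancing them is ``the main obstacle'' without resolving it, so the proposal stops exactly where the proof should begin. Moreover, in the form you set it up the balance cannot be struck: with squared Lipschitz constants the Gronwall exponent is $CL_N^2t$ while the damping exponent is only linear in $L_N$, hence $\bigl(L_Ne^{-\epsilon L_N}\bigr)^2e^{CL_N^2t}\to\infty$ as $N\to\infty$ for every fixed $t>0$. The paper's resolution keeps the Gronwall exponent \emph{linear} in $L_n$, so the resulting bound is (up to constants) $L_nTKe^{-L_n(\epsilon-2(T+1)t)}$, which tends to $0$ as $n\to\infty$ provided $t<t_0:=\frac{\epsilon}{2(T+1)}\wedge T$; this gives pathwise uniqueness on $[0,t_0]$ with $t_0$ independent of $n$, and one then iterates (since $X_{t_0}=Y_{t_0}$, repeat on $[t_0,2t_0\wedge T]$, and so on) to cover $[0,T]$ in finitely many steps. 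This short-time-plus-iteration device, together with a separate argument that the two switching processes agree for all time (the paper's stopping-time contradiction argument for $\tau=\inf\{t\ge 0:\alpha_t\neq\tilde\alpha_t\}$, which your heuristic about the Poisson measure only gestures at), is what your proposal is missing.
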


\begin{proof}

(i) Existence.

1. In order to construct a solution using Theorem \ref{thMo1}, we take a sequence of truncations of  $b$ and $\sigma$ as follows. For any $ n\geq 1 ,t\in [0, T],  x\in \mathbb R ^d, \mu \in \mathcal P(\mathbb R ^d), \alpha \in \mathcal M$, define
\begin{align*}
b^n(t, x, \mu , \alpha)&:= b(t, \phi_n (x), \mu \circ \phi^{-1}_n,\alpha),\\
\sigma^n(t, x, \mu, \alpha )&:= \sigma(t, \phi_n (x), \mu \circ \phi^{-1}_n, \alpha).
\end{align*}
For each $n\geq 1$, $ b^n $ and $ \sigma^n $ are Lipschitz and satisfy the linear growth condition. Therefore, by Theorem \ref{thMo1}, the equation
\begin{equation}\label{eqlocalappsde}
\left\{
\begin{aligned}
\d X^n_t&= b^n(t,X^n_t,\mathcal{L}_{X^n_t},\alpha^{n}_t)dt+ \sigma^n(t,X^n_t,\mathcal{L}_{X^n_t},\alpha^{n}_t)dW_t\\
X^n_0 &= X_0, \alpha^{n}_0= \alpha_0 \\
\end{aligned}
\right.
\end{equation}
and for $ i\neq j$,
\begin{align}\label{eqlocalswitching}
P(\alpha^{n} _{t+ \Delta t}=j | \alpha^{n}_t=i, (X^n_s, \alpha^{n}_s ), s\leq t)= q_{ij}(X^{n}_t) \Delta t+ o(\Delta t)
\end{align}
has a unique solution $(X^n_t, \alpha^n_t)$.
Define $ \tau ^n:= \inf\{ t\geq 0: |X^n_t|\geq n \} $. By the definition of $ \phi_n $, we have
$$
\phi_n(X ^n _t)= \frac{X ^n _t \cdot n}{|X ^n _t| \vee n}= X ^n _{t\wedge \tau^n}.
$$
Moreover, for any measurable set $ A\subset \mathbb R^d $, we obtain
\begin{align*}
&{(\mathcal{L}_{X^n_t})\circ \phi_n^{-1}}(A)= P(X^n_t \in \phi_n^{-1}(A))\\
=& P(\phi_n(X^n_t)\in A)=\mathcal{L}_{\phi_n (X^n_t)} (A)= \mathcal{L}_{X^n_{t\wedge \tau^n}} (A).
\end{align*}
So the equation \eqref{eqlocalappsde}--\eqref{eqlocalswitching} becomes
\begin{equation}
\left\{
\begin{aligned}
\d X^n_t&= b(t, X^n_{t\wedge \tau^n}, \mathcal{L}_{X^n_{t\wedge \tau^n}}, \alpha^n_t)dt+ \sigma(t, X^n_{t\wedge \tau^n}, \mathcal{L}_{X^n_{t\wedge \tau^n}}, \alpha^n_t)dW_t\\
X^n_0&= X_0, \alpha^n_0= \alpha\\
\end{aligned}
\right.
\end{equation}
and for $ i\neq j$,
\begin{align}
P(\alpha^{n}_{t+ \Delta t}=j | \alpha^{n}_t=i, (X^n_s, \alpha^{n}_s ), s\leq t)= q_{ij}(X^{n}_t) \Delta t+ o(\Delta t).
\end{align}

2. Applying It\^o's formula to $ V( X ^n _t , \alpha^n_t) $, we have
\begin{align*}
&V( X ^n _t , \alpha^n_t)- V( X^n_0, \alpha^n_0)\\
=& \int_{0}^{t} L^nV( X ^n _s , \alpha^n_s)ds
 + \int_{0}^{t}\nabla V \sigma^n(s, X^n_{s}, \mathcal{L}_{X^n_{s}}, \alpha^n_s)dW_s\\
& +\int_{0}^{t}\int_{\mathbb R}V( X^n_s, \alpha_0+ h( X^n_s, \alpha^n_s, z)) - V( X^n_s, \alpha^n_s)\mu(ds,dz),
\end{align*}
where $L^n$ represents the generator corresponding to the $n$-th equation for $(X^n,\alpha^n)$. Thus, taking expectation on both sides, we get
\begin{align*}
&EV( X ^n _{t \wedge \tau^n} , \alpha^n_{t \wedge \tau^n})- EV( X^n_0, \alpha^n_0)\\
=& E\int_0 ^{t \wedge \tau^n} L^nV( X ^n _s , \alpha^n_s)ds
 = E\int_0 ^t \mathbb\mathcal 1_{[0,\tau^n]}(s) L^nV( X ^n _s , \alpha^n_s)ds\\
=& E\int_0 ^t \mathbb\mathcal 1_{[0,\tau^n]}(s) \nabla V( X ^n _{s\wedge \tau^n} , \alpha^n_{s\wedge \tau^n})b^n(s, X ^n _{s\wedge \tau^n} ,\mathcal L_{X ^n _s}, \alpha^n_{s\wedge \tau^n})ds\\
&+  \frac{1}{2}E\int_0 ^t \mathbb\mathcal 1_{[0,\tau^n]}(s) \nabla^2 V( X ^n _{s\wedge \tau^n} , \alpha^n_{s\wedge \tau^n})A^n(s, X ^n _{s\wedge \tau^n} ,\mathcal L_{X ^n _s}, \alpha^n_{s\wedge \tau^n})ds\\
&+ E\int_0 ^t \mathbb\mathcal 1_{[0,\tau^n]}(s)Q(X ^n _{s\wedge \tau^n})V( X ^n _{s\wedge \tau^n},\cdot )(\alpha^n_{s\wedge \tau^n})ds\\
=& E\int_0 ^t \mathbb\mathcal 1_{[0,\tau^n]}(s) \nabla V( X ^n _{s\wedge \tau^n} , \alpha^n_{s\wedge \tau^n})b(s, X ^n _{s\wedge \tau^n} ,\mathcal L_{X ^n _{s\wedge \tau^n}}, \alpha^n_{s\wedge \tau^n})ds\\
&+  \frac{1}{2}E\int_0 ^t \mathbb\mathcal 1_{[0,\tau^n]}(s) \nabla^2 V( X ^n _{s\wedge \tau^n} , \alpha^n_{s\wedge \tau^n})A(s, X ^n _{s\wedge \tau^n} ,\mathcal L_{X ^n _{s\wedge \tau^n}}, \alpha^n_{s\wedge \tau^n})ds\\
&+ E\int_0 ^t \mathbb\mathcal 1_{[0,\tau^n]}(s)Q(X ^n _{s\wedge \tau^n})V( X ^n _{s\wedge \tau^n},\cdot )(\alpha^n_{s\wedge \tau^n})ds\\
=& E\int_0 ^t \mathbb\mathcal 1_{[0,\tau^n]}(s)LV( X ^n _{s\wedge \tau^n} , \alpha^n_{s\wedge \tau^n})ds\\
\leq & E\int_0 ^t \mathbb\mathcal 1_{[0,\tau^n]}(s) [\lambda_1 V( X ^n _{s\wedge \tau^n}, \alpha^n_{s\wedge \tau^n})+ \lambda_2 E\varphi(X ^n _{s\wedge \tau^n})]ds\\
\leq & \int_0 ^t \mathbb\mathcal 1_{[0,\tau^n]}(s) (\lambda_1+ \lambda_2)EV( X ^n _{s\wedge \tau^n}, \alpha^n_{s\wedge \tau^n})ds,
\end{align*}
where
\begin{align*}
A^n(s, x,\mu,\alpha)&:=\sigma^n(s, x,\mu,\alpha)\sigma^n(s, x,\mu,\alpha)^\top,\\
A(s, x,\mu,\alpha)&:=\sigma(s, x,\mu,\alpha)\sigma(s, x,\mu,\alpha)^\top.
\end{align*}
Applying Gronwall's inequality, we get
\begin{align*}
EV( X ^n _{t\wedge \tau^n}, \alpha^n_{t\wedge \tau^n}) &\leq e^{(\lambda_1 + \lambda_2)t}EV( X ^n _0, \alpha^n_0)\\
& \leq e^{(\lambda_1 + \lambda_2)T}EV( X ^n _0, \alpha^n_0)=:\delta.
\end{align*}
Denote $ \tau_N^n:= \inf\{t\geq 0:|X^n_t|\geq N\}, ~ n\geq N\geq 1$ and let $t= T \wedge \tau_N^n $, we have
\begin{align*}
&E V( X^n _{T\wedge\tau_N^n} , \alpha^n_{T\wedge\tau_N^n})\leq e^{(\lambda_1 + \lambda_2)T}EV( X ^n _0, \alpha^n_0)=:\delta.
\end{align*}
Consequently, $\tau_N^n $ satisfies
$$
P(\tau_N^n < T)\leq \frac{\delta}{V(N,\alpha^n_ {\tau_N^n})}.
$$

3.
Let $l\ge 1$ to be determined. By (H1) and BDG's inequality, there exists a constant $ C(N,l)>0 $ such that for any $ n\geq N $ we have
\begin{align*}
& E(\sup_{t\in[s,(s+\epsilon)\wedge T]}| X^n_{t\wedge \tau_N^n}- X^n_{s\wedge \tau_N^n} |^{2l})\\
=& E\bigg(\sup_{t\in[s,(s+\epsilon)\wedge T]}\bigg|\int_{s\wedge \tau_N^n}^{t\wedge \tau_N^n} b^n(r,X^n_r,\mathcal{L}_{X^n_r},\alpha^n_r)dr
+ \int_{s\wedge \tau_N^n}^{t\wedge \tau_N^n} \sigma ^n(r,X^n_r,\mathcal{L}_{X^n_r},\alpha^n_r)dW_r \bigg|^{2l}\bigg)\\
\leq & C(l) E\bigg[\sup_{t\in[s,(s+\epsilon)\wedge T]} C_N \epsilon^{2l}+\sup_{t\in[s,(s+\epsilon)\wedge T]}\bigg(\int_{s\wedge \tau_N^n}^{t\wedge \tau_N^n}|\sigma^n(r,X^n_r,\mathcal{L}_{X^n_r},\alpha^n_r)|^2 dr\bigg)^l\bigg]\\
\leq & C(N,l) \epsilon^l.
\end{align*}
Let $ k=[\frac{T}{\epsilon}]+1$ where $[a]$ denotes the integer part of $a\in \mathbb R$. Then we obtain
\begin{align*}
&E(\sup_{s,t\in[0,T],|t-s|\leq \epsilon}| X^n_{t\wedge \tau_N^n}-X^n_{s\wedge \tau_N^n} |^{2l})\\
\leq & C(l)\sum_{j=1}^{k} E(\sup_{t\in[(j-1)\epsilon,j\epsilon \wedge T]}| X^n_{t\wedge \tau_N^n}-X^n_{(j-1)\epsilon\wedge \tau_N^n} |^{2l})\\
\leq & C(N,l)(T+\epsilon)\epsilon^{l-1}.
\end{align*}
Therefore, by H\"older's inequality we have
\begin{align*}
&E(\sup_{s,t\in[0,T],|t-s|\leq \epsilon}| X^n_{t\wedge \tau_N^n}- X^n_{s\wedge \tau_N^n} | )
\leq (C(N,l)(T+\epsilon))^{\frac{1}{2l}}\epsilon^{\frac{1}{2}-\frac{1}{2l}}.
\end{align*}
Taking $ l=2 $, we get
\begin{equation}
E(\sup_{s,t\in[0,T],|t-s|\leq \epsilon}| X^n_{t\wedge \tau_N^n}- X^n_{s\wedge \tau_N^n} | )
\leq (C(N)(T+\epsilon))^{\frac{1}{4}}\epsilon^{\frac{1}{4}}.
\end{equation}
When $ n=N, \tau_N^n=\tau_n^n=\tau^n $. By Arzela-Ascoli type theorem for measures, the sequence $\{\mu^n:= \mathcal{L}_{X^n_{t\wedge \tau^n}}\} $ is tight in $ \mathcal P (C[0, T])$. Therefore, by the Prokhorov theorem, there exists a subsequence, still denoted $ \{\mu^n\}$, such that $ \mu^n \to \mu $ weakly in $ \mathcal{P} (C[0, T]) $ as $ n\to \infty $.

4.
Define $ \tau_N^{n,m}:=\tau_N^{n}\wedge \tau_N^{m} $. Then for any $m \geq n \geq N$
\begin{align}\label{eqofphiX}
\phi_N (X^{j}_{t\wedge \tau_N^{n,m}})= X^{j}_{t\wedge \tau_N^{n,m}},~ j\in\{n,m\}
\end{align}
and
\begin{align}\label{eqofmuphi}
\lim_{n\to \infty} \sup_{m\geq n} \mu^{n}\circ \phi_{m}^{-1}= \mu \hbox{~ weakly ~in ~}\mathcal P (\mathbb R^d).
\end{align}
By Cauchy-Schwarz inequality and BDG's inequality, we arrive
\begin{align*}
&E( \sup_{0\leq s\leq t} | X^{n}_{s\wedge \tau_N^{n,m}}- X^{m}_{s\wedge \tau_N^{n,m}} |^2 )\\
\leq & 2E\bigg( \sup_{0\leq s\leq t}\bigg|\int_0^{s\wedge \tau_N^{n,m}} b^n(r,X^n_r,\mathcal{L}_{X^n_r},\alpha^n_r)- b^m(r,X^m_r,\mathcal{L}_{X^m_r},\alpha^m_r) dr \bigg|^2 \bigg)\\
& +
2E\bigg( \sup_{0\leq s\leq t} \bigg|\int_0^{s\wedge \tau_N^{n,m}} \sigma^n(r,X^n_r,\mathcal{L}_{X^n_r},\alpha^n_r)- \sigma^m(r,X^m_r,\mathcal{L}_{X^m_r},\alpha^m_r) dW_r\bigg|^2\bigg)\\
\leq & 2TE \int_0^{t\wedge \tau_N^{n,m}} |b^n(r,X^n_r,\mathcal{L}_{X^n_r},\alpha^n_r)- b^m(r,X^m_r,\mathcal{L}_{X^m_r},\alpha^m_r)|^2 dr \\
& +
CE \int_0^{t\wedge \tau_N^{n,m}} |\sigma^n(r,X^n_r,\mathcal{L}_{X^n_r},\alpha^n_r)- \sigma^m(r,X^m_r,\mathcal{L}_{X^m_r},\alpha^m_r)|^2 dr \\
=& 2TE \int_0^{t\wedge \tau_N^{n,m}} |b(r,X^n_{r\wedge \tau^n},\mu^n_r,\alpha^n_r)- b(r,X^m_{r\wedge \tau^m},\mu^m_r,\alpha^m_r)|^2 dr \\
& +
CE \int_0^{t\wedge \tau_N^{n,m}} |\sigma(r,X^n_{r\wedge \tau^n},\mu^n_r,\alpha^n_r)- \sigma(r,X^m_{r\wedge \tau^m},\mu^m_r,\alpha^m_r)|^2 dr \\
\leq &
4TE \int_0^{t\wedge \tau_N^{n,m}} |b(r,X^n_{r\wedge \tau^n},\mu^n_r,\alpha^n_r)- b(r,X^m_{r\wedge \tau^m},\mu^m_r,\alpha^n_r)|^2 dr \\
& +
4TE \int_0^{t\wedge \tau_N^{n,m}} |b(r,X^m_{r\wedge \tau^m},\mu^m_r,\alpha^n_r)- b(r,X^m_{r\wedge \tau^m},\mu^m_r,\alpha^m_r)|^2 dr \\
& +
CE \int_0^{t\wedge \tau_N^{n,m}} |\sigma(r,X^n_{r\wedge \tau^n},\mu^n_r,\alpha^n_r)- \sigma(r,X^m_{r\wedge \tau^m},\mu^m_r,\alpha^n_r)|^2 dr \\
& +
CE \int_0^{t\wedge \tau_N^{n,m}} |\sigma(r,X^m_{r\wedge \tau^m},\mu^m_r,\alpha^n_r)- \sigma(r,X^m_{r\wedge \tau^m},\mu^m_r,\alpha^m_r)|^2 dr.
\end{align*}
By \eqref{eqofphiX}, \eqref{eqofmuphi} and (H1), there exists a family of constants $ \{ \epsilon_{n,m}: m\ge n\geq 1 \} $ with $ \epsilon_{n,m}\to 0 $ as $ n\to \infty $ such that
\begin{align*}
& | b(t,X^{n}_{t\wedge \tau_N^{n,m}},\mu^{n}_t,\alpha^{n}_t)- b(t,X^{m}_{t\wedge \tau_N^{n,m}}, \mu^{m}_t, \alpha^{n}_t) |\\
\leq &
| b(t, X^{n}_{t\wedge \tau_N^{n,m}}, \mu^{n}_t\circ \phi_{n}^{-1}, \alpha^{n}_t)- b(t, X^{m}_{t\wedge \tau_N^{m}}, \mu^{n}_t\circ \phi_{n}^{-1}, \alpha^n_t) |\\
& + | b(t,X^{m}_{t\wedge \tau_N^{n,m}}, \mu^{n}_t\circ \phi_{n}^{-1}, \alpha^{n}_t)- b(t, X^{m}_{t\wedge \tau_N^{n,m}}, \mu^{m}_t\circ \phi_{m}^{-1}, \alpha^{n}_t) |\\
\leq &
C_N \cdot | X^{n}_{t\wedge \tau_N^{n,m}}- X^{m}_{t\wedge \tau_N^{n,m}} | + C_N  \cdot \epsilon_{n,m}
\end{align*}
when $n\geq N$.
Next, we treat the term with different switching. Partition the interval $[0,T]$ by $\epsilon _{n,m}$ (for short $\epsilon$). We obtain
\begin{align}\label{ieqofdifferentswitching}
\nonumber &E\int_0^{t\wedge \tau_N^{n,m}} |b(r,X^m_{r\wedge \tau^m},\mu^m_r,\alpha^n_r)- b(r,X^m_{r\wedge \tau^m},\mu^m_r,\alpha^m_r)|^2 dr \\\nonumber
\leq &
E\sum_{k=0}^{[T/\epsilon]} \int_{k\epsilon }^{(k+1)\epsilon }\mathcal\mathbb 1_{[0,\tau_N^{n,m}]}(r)|b(r,X^m_{r\wedge \tau^m},\mu^m_r,\alpha^n_r)- b(r,X^m_{r\wedge \tau^m},\mu^m_r,\alpha^m_r)|^2 dr\\
\leq &
3\sum_{k=0}^{[T/\epsilon]} E\int_{k\epsilon }^{(k+1)\epsilon }\mathcal\mathbb 1_{[0,\tau_N^{n,m}]}(r)|b(r,X^m_{r\wedge \tau^m},\mu^m_r,\alpha^n_r)- b(r,X^m_{k\epsilon \wedge \tau^m},\mu^m_r,\alpha^n_r)|^2 dr\\\nonumber
& +
3\sum_{k=0}^{[T/\epsilon]} E\int_{k\epsilon }^{(k+1)\epsilon }\mathcal\mathbb 1_{[0,\tau_N^{n,m}]}(r)|b(r,X^m_{k\epsilon \wedge \tau^m},\mu^m_r,\alpha^n_r)- b(r,X^m_{k\epsilon \wedge \tau^m},\mu^m_r,\alpha^m_r)|^2 dr\\\nonumber
& +
3\sum_{k=0}^{[T/\epsilon]} E\int_{k\epsilon }^{(k+1)\epsilon }\mathcal\mathbb 1_{[0,\tau_N^{n,m}]}(r)|b(r,X^m_{k\epsilon \wedge \tau^m},\mu^m_r,\alpha^m_r)- b(r,X^m_{r\wedge \tau^m},\mu^m_r,\alpha^m_r)|^2 dr.
\end{align}
For the first term of the right-hand side of \eqref{ieqofdifferentswitching}, by the local Lipschitz continuity of coefficient $b$ we have
\begin{align*}
&\sum_{k=0}^{[T/\epsilon ]} E\int_{k\epsilon }^{(k+1)\epsilon }\mathcal\mathbb 1_{[0,\tau_N^{n,m}]}(r)|b(r,X^m_{r\wedge \tau^m},\mu^m_r,\alpha^n_r)- b(r,X^m_{k\epsilon \wedge \tau^m},\mu^m_r,\alpha^n_r)|^2 dr\\
\leq &
\sum_{k=0}^{[T/\epsilon ]} E\int_{k\epsilon }^{(k+1)\epsilon }\mathcal\mathbb 1_{[0,\tau_N^{n,m}]}(r)C_N^2|X^m_{r\wedge \tau^m}- X^m_{k\epsilon \wedge \tau^m}|^2 dr\\
\leq &
\sum_{k=0}^{[T/\epsilon ]}C(N)\epsilon^2
\leq  C(N,T)\epsilon.
\end{align*}
In the same way, for the last term of \eqref{ieqofdifferentswitching} we get
\begin{align*}
E\sum_{k=0}^{[T/\epsilon ]} \int_{k\epsilon}^{(k+1)\epsilon }\mathcal\mathbb 1_{[0,\tau_N^{n,m}]}(r)|b(r,X^m_{k\epsilon \wedge \tau^m},\mu^m_r,\alpha^m_r)- b(r,X^m_{r\wedge \tau^m},\mu^m_r,\alpha^m_r)|^2 dr
\leq C(N,T)\epsilon.
\end{align*}
As for the second term of the right-hand side of \eqref{ieqofdifferentswitching}, we have for each $k=0,1,...,[T/\epsilon]$
\begin{align}\label{ieqofdifferentswitching1}
\nonumber &E\int_{k\epsilon}^{(k+1)\epsilon }\mathcal\mathbb 1_{[0,\tau_N^{n,m}]}(r)|b(r,X^m_{k\epsilon \wedge \tau^m},\mu^m_r,\alpha^n_r)- b(r,X^m_{k\epsilon \wedge \tau^m},\mu^m_r,\alpha^m_r)|^2 dr\\
\leq &
2E\int_{k\epsilon }^{(k+1)\epsilon }\mathcal\mathbb 1_{[0,\tau_N^{n,m}]}(r)|b(r,X^m_{k\epsilon \wedge \tau^m},\mu^m_r,\alpha^n_r)- b(r,X^m_{k\epsilon \wedge \tau^m},\mu^m_r,\alpha^m_{k\epsilon }|^2 dr\\\nonumber
& +
2E\int_{k\epsilon }^{(k+1)\epsilon}\mathcal\mathbb 1_{[0,\tau_N^{n,m}]}(r)|b(r,X^m_{k\epsilon \wedge \tau^m},\mu^m_r,\alpha^m_{k\epsilon })- b(r,X^m_{k\epsilon \wedge \tau^m},\mu^m_r,\alpha^m_r)|^2 dr.
\end{align}
For the second term of the right-hand side of $\eqref{ieqofdifferentswitching1}$, we get
\begin{align*}
&E\int_{k\epsilon }^{(k+1)\epsilon }\mathcal\mathbb 1_{[0,\tau_N^{n,m}]}(r)|b(r,X^m_{k\epsilon \wedge \tau^m},\mu^m_r,\alpha^m_{k\epsilon })- b(r,X^m_{k\epsilon \wedge \tau^m},\mu^m_r,\alpha^m_r)|^2 dr\\
= &
E\sum_{j\neq i,i\in \mathcal M}\int_{k\epsilon }^{(k+1)\epsilon }\mathcal\mathbb 1_{[0,\tau_N^{n,m}]}(r)|b(r,X^m_{k\epsilon \wedge \tau^m},\mu^m_r,i)- b(r,X^m_{k\epsilon \wedge \tau^m},\mu^m_r,j)|^2\mathcal\mathbb 1_{\{\alpha^m_r=j\}}\mathcal\mathbb 1_{\{\alpha^m_{k\epsilon }=i\}} dr\\
\leq &
E\sum_{j\neq i,i\in \mathcal M}\int_{k\epsilon }^{(k+1)\epsilon }4C_N^2E(\mathcal\mathbb 1_{\{\alpha^m_r=j\}}| X^m_{k\epsilon }, \alpha^m_{k\epsilon }=i) \mathcal\mathbb 1_{\{\alpha^m_{k\epsilon }=i\}}dr\\
\leq &
4C_N^2 E\sum_{i\in \mathcal M}\int_{k\epsilon }^{(k+1)\epsilon } \mathcal\mathbb 1_{\{\alpha^m_{k\epsilon }=i\}}\cdot \bigg(\sum_{j\neq i}q_{ij}(X^m_{k\epsilon })(r- {k\epsilon })+ o(r- {k\epsilon })\bigg) dr\\
\leq &
C(N,M)\epsilon ^2,
\end{align*}
where $M$ denotes the bound of $Q$. To treat the first term of the right-hand side of \eqref{ieqofdifferentswitching1}, we use the technique of basic coupling of Markov processes. Denote by $\tilde Q(x_1,x_2):=(\tilde q_{(k,l),(j,i)}(x_1,x_2))$ the basic coupling of $Q(x_1)$ and $Q(x_2)$, which satisfies
\begin{align*}
\tilde Q(x_1,x_2)f(k,l)
&=
\sum_{(j,i)\in \mathcal M \times \mathcal M}\tilde q_{(k,l),(j,i)}(x_1,x_2)(f(j,i)-f(k,l))\\
&=
\sum_{j\in \mathcal M}(q_{kj}(x_1)- q_{lj}(x_2))^+(f(j,l)-f(k,l))\\
& \quad +
\sum_{j\in \mathcal M }(q_{lj}(x_2)-q_{kj}(x_1))^+(f(k,j)-f(k,l))\\
& \quad +
\sum_{j\in \mathcal M}(q_{kj}(x_1)\wedge q_{lj}(x_2))(f(j,j)-f(k,l))
\end{align*}
for any function $f: \mathcal M \times \mathcal M \to \mathbb R $. Consequently, let $(\alpha^n_t,\alpha^m_t)$ be a stochastic process on a finite state space $\mathcal M \times \mathcal M$ with generator $\tilde Q(x_1,x_2)$. Then for any $i_1,i_2,j\in \mathcal M$ with $j\neq i_2$, $r\in [k\epsilon, k\epsilon+ \epsilon)$ we have
\begin{align*}
&E(\mathcal\mathbb 1_{\{ \alpha^n_r=j\}} | \alpha^n_{k\epsilon }=i_1, \alpha^m_{k\epsilon }=i_2,X^n_{k\epsilon }=x_1, X^m_{k\epsilon }=x_2 )\\
=&
\sum_{l\in \mathcal M}E(\mathcal\mathbb 1_{\{\alpha^n_r=j\}} \mathcal\mathbb 1_{\{\alpha^m_r=l\}} | \alpha^n_{k\epsilon}=i_1, \alpha^m_{k\epsilon }=i_2,X^n_{k\epsilon }=x_1, X^m_{k\epsilon }=x_2 )\\
= &
\sum_{l\in \mathcal M}\tilde q_{(i_1,i_2),(j,l)}(x_1,x_2)(r-k\epsilon )+ o(r-k\epsilon )
\leq
m \tilde M\epsilon,
\end{align*}
where $\tilde M$ denotes the bound of $\tilde Q$.
Thus, for the first term of \eqref{ieqofdifferentswitching1} we obtain
\begin{align*}
&E\int_{k\epsilon }^{(k+1)\epsilon }\mathcal\mathbb 1_{[0,\tau_N^{n,m}]}(r)|b(r,X^m_{k\epsilon \wedge \tau^m},\mu^m_r,\alpha^n_r)- b(r,X^m_{k\epsilon \wedge \tau^m},\mu^m_r,\alpha^m_{k\epsilon }|^2 dr\\
= &
E\sum_{j\neq i,i\in \mathcal M}\int_{k\epsilon }^{(k+1)\epsilon }\mathcal\mathbb 1_{[0,\tau_N^{n,m}]}(r)|b(r,X^m_{k\epsilon \wedge \tau^m},\mu^m_r,j)- b(r,X^m_{k\epsilon \wedge \tau^m},\mu^m_r,i)|^2\mathcal\mathbb 1_{\{\alpha^n_r=j\}} \mathcal\mathbb 1_{\{\alpha^m_{k\epsilon }=i\}} dr\\
\leq &
E\sum_{j\neq i,i,i_1\in \mathcal M}\int_{k\epsilon }^{(k+1)\epsilon } 4C_N^2 \mathcal\mathbb 1_{\{\alpha^m_{k\epsilon }=i,\alpha^n_{k\epsilon }=i_1\}} E(\mathcal\mathbb 1_{\{\alpha^n_r=j\}} | \alpha^n_{k\epsilon }=i_1, \alpha^m_{k\epsilon }=i,X^n_{k\epsilon }=x_2, X^m_{k\epsilon}=x)dr\\
\leq &
C(N,\tilde M)\epsilon^2.
\end{align*}
So we obtain
\begin{align*}
E\int_0^{t\wedge \tau_N^{n,m}} |b(r,X^m_{r\wedge \tau^m},\mu_m,\alpha^n_r)- b(r,X^m_{r\wedge \tau^m},\mu_m,\alpha^m_r)|^2 dr
\leq C(N,T,M,\tilde M)\epsilon_{n,m}.
\end{align*}
Similarly, we have for any $m\geq n\geq N$
\begin{align*}
&|\sigma(t,X^{n}_{t\wedge \tau_N^{n,m}},\mu^{n}_t,\alpha^{n}_t)- \sigma(t,X^{m}_{t\wedge \tau_N^{n,m}},\mu^{m}_t,\alpha^{n}_t)|\\
\leq & C_N | X^{n}_{t\wedge \tau_N^{n,m}}-X^{m}_{t\wedge \tau_N^{n,m}} | + C_N \epsilon_{n,m}
\end{align*}
and
\begin{align*}
E\int_0^{t\wedge \tau_N^{n,m}} |\sigma(r,X^m_{r\wedge \tau^m},\mu^m_r,\alpha^n_r)- \sigma(r,X^m_{r\wedge \tau^m},\mu^m_r,\alpha^m_r)|^2 dr
\leq
C(N,T,M,\tilde M)\epsilon_{n,m}.
\end{align*}
Now we arrive
\begin{align*}
&E( \sup_{0\leq s\leq t} | X^{n}_{s\wedge \tau_N^{n,m}}- X^{m}_{s\wedge \tau_N^{n,m}} |^2 )\\
\leq &
C(N,T) \int_{0}^{t}E| X^{n}_{s\wedge \tau_N^{n,m}}- X^{m}_{s\wedge \tau_N^{n,m}} |^2 ds+ C(N,T,M,\tilde M) \epsilon_{n,m}, ~ m\geq n\geq N.
\end{align*}
By Gronwall's inequality, we get
\begin{equation*}
E(\sup_{0\leq s\leq t} | X^{n}_{s\wedge \tau_N^{k,l}}- X^{m}_{s\wedge \tau_N^{k,l}} |^2)
\leq e^{ C(N,T)t}C(N,T,M,\tilde M)\epsilon_{n,m},~ m\geq n\geq N.
\end{equation*}
Then it follows that
\begin{equation}\label{ieqmcs2}
\lim_{n\to \infty} \sup_{m\geq n}E(\sup_{0\leq t\leq T}| X^{n}_{t\wedge \tau_N^{n,m}}- X^{m}_{t\wedge \tau_N^{n,m}} |^2)
\leq \lim_{n\to \infty} \sup_{m\geq n} e^{C(N,T)T}C(N,T,M,\tilde M) \epsilon_{n,m}= 0.
\end{equation}
Therefore, for any $ \epsilon> 0,m\geq n\geq N $, we obtain
\begin{align*}
& P(\sup_{0\leq t\leq T}| X^{n}_{t\wedge \tau^{n}}- X^{m}_{t \wedge \tau^{m}} | > \epsilon)\\
\leq &
P(\tau_N^{n}< T)+ P(\tau_N^{m}< T)+ P(\sup_{0\leq t\leq T}| X^{n}_{t\wedge \tau_N^{n,m}}- X^{m}_{t\wedge \tau_N^{n,m}} | > \epsilon)\\
\leq &
\frac{\delta}{V(N, \alpha^n( \tau_N^n))}+\frac{\delta}{V(N, \alpha^m( \tau_N^m))}+ P(\sup_{0\leq t\leq T}| X^{n}_{t\wedge \tau_N^{n,m}}- X^{m}_{t\wedge \tau_N^{n,m}} | > \epsilon).
\end{align*}
Combining this with \eqref{ieqmcs2}, for any $N\geq 1, \epsilon > 0$ we have
$$
\lim_{n\to \infty} \sup_{m\geq n}P(\sup_{0\leq t\leq T}|X^{n}_{t}- X^{m}_{t} | > \epsilon)
\leq
\frac{\delta}{V(N, \alpha^n( \tau_N^n))}+\frac{\delta}{V(N, \alpha^m( \tau_N^m))}.
$$
Letting $ N\to \infty $, we get that $ X^{n}_{\cdot } $ converges to a process $ X_{\cdot} $ in probability uniformly in $ [0, T] $. Therefore, there exists a subsequence, still denoted $\{ X_n\}$, such that $P$-$a.s.$
$$
\lim_{n\to \infty} \sup_{0\leq t\leq T}|X^ {n}_{t}- X_t |= 0.
$$
Especially, $\mathcal L_{X^ {n}_{t}} \to \mathcal{L}_{X_t} $ weakly in $ \mathcal P (C[0, T]) $. By the uniqueness of limit, we have $\mathcal{L}_{X_t}= \mu_t,~ t\in [0, T] $.
Therefore, combining this with (H1) and (H3), we let $n\to \infty$ in \eqref{eqlocalappsde}--\eqref{eqlocalswitching} to conclude that $X_{\cdot}$ satisfies
$$
X_t= X_0+ \int_{0}^{t }b(s,X_s,\mathcal{L}_{X_s},\alpha_s)ds+ \int_{0}^{t}\sigma(s,X_s,\mathcal{L}_{X_s},\alpha_s)dW_s,
$$
and for $i\neq j$,
$$
P(\alpha_{t+ \Delta t}=j | \alpha_{t}=i, (X_s, \alpha_s ), s\leq t)= q_{ij}(X_t) \Delta t+ o(\Delta t).
$$

5.
By It\^o's formula and (H2), we have
\begin{align*}
EV( X_t, \alpha_t)- EV( X_0, \alpha_0)= E\int_0 ^t LV( X_s, \alpha_s)ds
\leq  (\lambda_1+\lambda_2)E\int_0 ^tV( X_s, \alpha_s)ds.
\end{align*}
The estimate mentioned in the theorem now follows from Gronwall's inequality.

(ii). Uniqueness.

Assume that $(X_t, \alpha_t)$ and $(Y_t, \tilde\alpha_t )$ are two solutions with the same initial value.

1. If $ \alpha_t = \tilde\alpha_t, ~t\geq 0, a.s.$

We first prove the pathwise uniqueness up to a time $ t_0\in [0,T] $. Define the stopping time
$$
\tau_n:= \tau _n^X \wedge \tau _n^Y= \inf\{t\geq 0: | X(t) |\vee | Y(t) |\geq n\}, \quad n\geq 1.
$$
Then by (H4) and BDG's inequality we have
\begin{align*}
& E| X(t\wedge \tau_n)- Y(t\wedge \tau_n) |^2\\
& \leq 2T E \int_{0}^{t\wedge \tau_n}| b(s, X_s, \mathcal{L}_{X_s}, \alpha_s)- b(s, Y_s, \mathcal{L}_{Y_s}, \alpha_s) |^2 ds\\
& \quad + C E \int_{0}^{t\wedge \tau_n} |\sigma(s,X_s,\mathcal{L}_{X_s},\alpha_s)-\sigma(s,Y_s,\mathcal{L}_{Y_s},\alpha_s) |^2 ds\\
& \leq ( 2L_n T+ C \cdot L_n )E \int_{0}^{t\wedge \tau_n} |X_s- Y_s|^2+ W_{2,n}(\mathcal L _{X_s}, \mathcal L _{Y_s})^2+ K e^{-L_n \epsilon}ds\\
& \leq 2( L_n T+ C \cdot L_n )E \int_{0}^{t} |X_{s\wedge \tau_n}- Y_{s\wedge \tau_n}|^2+ K e^{-L_n \epsilon}ds.
\end{align*}
Applying Gronwall's inequality, we get
\begin{align*}
E| X(t\wedge \tau_n)- Y(t\wedge \tau_n) |^2 &\leq e^{2( L_n T+ C \cdot L_n )t}2( L_n T+ C \cdot L_n )T K e^{-L_n \epsilon}\\
&=2( L_n T+ L_n )T K e^{-L_n(\epsilon- 2( T+ 1 )t)}.
\end{align*}
Therefore, letting $n\to \infty $ and using Fatou's lemma, we get the uniqueness up to the time $t_0:={\frac{\epsilon}{2(T+ 1)} \wedge T}$.

If $t_0=T$, the proof is finished. Otherwise, because of $ X_{t_0}= Y_{t_0} $, we can use the same method to prove that the uniqueness holds up to the time $2t_0\wedge T$. Repeating this procedure, we can prove the uniqueness up to the time $ T $.

2. If $\alpha_t$ and $\tilde\alpha_t$ are not equal almost surely. Define $ \tau := \inf \left\{t\geq 0: \alpha_t \neq \tilde\alpha_t\right\}$, we want to prove $\tau= \infty ~ a.s.$ Obviously, this is equivalent to $\tau \wedge N= N$ for any $N>0$. Let $\eta := \tau \wedge N$ and $E:=\left\{ \omega: \eta(\omega)< N\right\}$.

Claim: $ P(E)=0$.

Indeed, if $ P(E)>0$, then for $a.s.~ \omega \in E$ we have
\begin{align*}
X_s(\omega)= Y_s(\omega), \alpha_s(\omega)= \tilde \alpha_s(\omega), ~\forall~ s\leq \tau(\omega) < N.
\end{align*}
Let $\eta_{\alpha}= \inf \left\{ s> \eta: \alpha_s \neq \alpha_{\eta}\right\}$, $\eta_{\tilde \alpha}= \inf \left\{ s> \eta: \tilde\alpha_s \neq \tilde\alpha_{\eta}\right\}$. By the definition of $\eta_{ \alpha}, \eta_{\tilde \alpha}$ and $\eta$, we have $\eta_{\alpha} \geq \eta, \eta_{\tilde \alpha}\geq \eta$ and there exists $\delta>0$ such that
\begin{align*}
\inf_{x\in \mathbb R^d, i\in \mathcal M}P(\eta_{\alpha} >\eta+ \delta: \alpha_{\eta}=i, X_{\eta}=x )\geq 1-\frac{1}{4}P(B),\\
\inf_{y\in \mathbb R^d, i\in \mathcal M}P(\eta_{\tilde\alpha} >\eta+ \delta: \tilde\alpha_{\eta}=i, Y_{\eta}=y )\geq 1-\frac{1}{4}P(B).
\end{align*}
Therefore, we get
\begin{align*}
&P(\eta_{\alpha} >\eta+ \delta)\\
&= \int _{\mathbb R^d \times \mathcal M}P(\eta_{\alpha} >\eta+ \delta: \alpha_{\eta}=i, X_{\eta}=x )P((X_{\eta}, \alpha_{\eta})\in (dx,di))\\
&\geq 1- \frac{1}{4}P(B).
\end{align*}
In the same way, we have
\begin{align*}
P(\eta_{\tilde\alpha} >\eta+ \delta) \geq 1- \frac{1}{4}P(B).
\end{align*}
Thus, we arrive
\begin{align*}
P(\left\{\eta_{\tilde\alpha} >\eta+ \delta\right\}\cap B) \geq P(\eta_{\tilde\alpha} >\eta+ \delta)- P(B^c) \geq \frac{3}{4}P(B)>0.
\end{align*}
Moreover, we obtain
\begin{align*}
P(\left\{\eta_{\alpha} >\eta+ \delta\right\} \cap \left\{\eta_{\tilde\alpha} >\eta+ \delta\right\} \cap B) \geq 1- \frac{1}{4}P(B)-(1- \frac{3}{4})P(B)= \frac{1}{2}P(B)>0.
\end{align*}
Define $\tilde\eta := \min\left\{ \eta_{\alpha}, \eta_{\tilde\alpha} \right\}$ and
$ \tilde\tau:= \tilde\eta \mathbb\mathcal 1_{\tau \leq M} + \zeta \mathbb\mathcal 1_{\tau > M}$. Then
 we get
\begin{align*}
P(\{ \tilde\tau> \tau \} \cap B)\geq P(\left\{\tilde\eta >\eta+ \delta\right\}\cap B)>0,
\end{align*}
and if $ \tau\leq M$, we have $\tilde\tau= \tilde\eta\geq \tau= \gamma = \tau \wedge M$. Therefore, there exists a subset $A$ of $B$ such that $\tau< \tilde\tau$ and $\alpha_t= \tilde\alpha_t$ for any $t\leq \tilde\tau$. This contradicts the definition of $\tau$. The proof is complete.
\end{proof}

\begin{remark}\rm
(i) When the Lyapunov function $V$ is independent of switching, we can choose $\varphi= V$ in (H2).

(ii) When $\mathcal M =\{1\}$, i.e. there is no switching in the equation \eqref{eqSDE}--\eqref{eqPofS}.
Comparing with Ren at al \cite{RTW}, it seems that our Lyapunov function condition is simpler; note also that their Lyapunov function cannot grow faster than $|x|^2$, while our condition has no this kind of restriction.
By taking $|x|^2$ as the Lyapunov function in our Theorem \ref{thlL1}, our result reduces to that of Hu \cite[Theorem 2.1]{Hu}.
\end{remark}

\section{Invariant measures and exponential convergence}

In this section, we investigate long time behaviors of solutions to \eqref{eqSDE}--\eqref{eqPofS}, i.e. the existence and uniqueness of invariant measures and exponential convergence to them. We divide this section into two parts: $\mathcal M= \{1\}$ and $\mathcal M = \{1,2,...,m\}$.

\subsection{The MVSDE case}
We first consider the special case $\mathcal M= \{1\}$, i.e. MVSDEs.
\begin{enumerate}\label{THMIPM}
\item [(H5)](Integrable Lyapunov condition)
There exists a function $  \tilde V :\mathbb R^d \to \mathbb R^{+}$ which is twice continuously differentiable w.r.t. $ x\in \mathbb R^d$ and satisfies $\tilde V(x)=0$ if and only if $x= 0$, such that there is a constant $ \gamma> 0 $ satisfying for each $\pi \in \mathcal C(\mu, \nu)$,
\begin{align}\label{ieqILC}
\int_{\mathbb R^d \times \mathbb R^d}\tilde L\tilde V( x-y )\pi(dx, dy)
\leq
-\gamma \int_{\mathbb R^d \times \mathbb R^d}\tilde V(x-y)\pi(dx, dy),
\end{align}
where $\tilde L \tilde V$ is defined by
\begin{align*}
\tilde L \tilde V(x-y):=&(b(t, x, \mu, 1)-b(t,y, \nu, 1))\nabla\tilde V(x-y)\\
& + \frac{1}{2}tr(\nabla^2\tilde V(x-y) A(t,x, y, \mu, \nu,1)),
\end{align*}
with $A(t,x, y, \mu, \nu,1)=(\sigma(t, x, \mu, 1)-\sigma(t,y,\nu,1))(\sigma(t, x, \mu, 1)-\sigma(t,y,\nu,1))^\top$.
\end{enumerate}

The function $\tilde V$ induces naturally a Wasserstein quasi-distance which is given by
$$
W_{\tilde V}(\mu, \nu):= \inf_{\pi \in \mathcal C(\mu, \nu)} \int_{\mathbb R^d \times \mathbb R^d}\tilde V(x-y)\pi(dx, dy) \quad \hbox{for } \mu, \nu \in \mathcal P_{\tilde V},
$$
where $\mathcal P_{\tilde V}:=\{ \mu\in \mathcal P(\mathbb R^d) : \mu(\tilde V)< \infty \}$.
In general, $W_{\tilde V}$ is not a distance because the triangle inequality may not hold. But it is complete in the sense that any $W_{\tilde V}$--Cauchy sequence in $\mathcal P _{\tilde V}$ is convergent.
When $d(x,y):=\tilde V(x-y)$ is a distance on $\mathbb R^d$, $W_{\tilde V}$ satisfies the triangle inequality and is hence a distance on $\mathcal P_{\tilde V}$. In what follows, we will study the exponential
ergodicity under this distance, which is simple and different from that of Hairer and Mattingly \cite{HM}. They used a Lyapunov function to construct a family of distances on both the state space and the probability measure
space to conclude the exponential ergodicity in total variation distance, which is now extensively adopted.

We have the following result on invariant measures and exponential convergence for MVSDEs.

\begin{theorem}\label{thinvariantmeasureofDDSDE}
Assume that (H1)--(H5) hold and $\mathcal M=\{1\}$.

(i) For any initial measures $ \mu_0 ,\nu_0 \in \mathcal P _{\tilde V}$, we have for $t\ge 0$
$$
W_{\tilde V}\left( P_t^*\mu_0, P_t^*\nu_0\right) \leq e^{-\gamma t} W_{\tilde V}\left( \mu_0, \nu_0 \right).
$$

(ii) If the coefficients $b$ and $\sigma$ are independent of $t$ and there exists $\nu_0 \in \mathcal P _{\tilde V}$ such that
\begin{align}\label{bounded}
\sup_{t\geq0} W_{\tilde V}\left( P_t^*\nu_0, \nu_0 \right)<\infty,
\end{align}
then there exists a unique invariant measure $\mu_{\mathcal I} \in \mathcal P_{\tilde V}$ such that
$$
W_{\tilde V}\left( P_t^*\mu_0, \mu_{\mathcal I} \right) \leq e^{-\gamma t} W_{\tilde V}\left( \mu_0, \mu_{\mathcal I}  \right) \quad \hbox{for } t \geq 0,~\mu_0 \in \mathcal P_{\tilde V}.
$$
\end{theorem}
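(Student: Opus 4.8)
The plan is to prove the contraction in (i) by a synchronous coupling combined with the integrable Lyapunov condition \eqref{ieqILC}, and then to derive (ii) by a Cauchy-sequence argument in the complete quasi-metric space $(\mathcal P_{\tilde V},W_{\tilde V})$. For (i), fix $\varepsilon>0$ and pick an $\varepsilon$-optimal coupling of $\mu_0,\nu_0$, realized by $\mathcal F_0$-measurable variables $X_0\sim\mu_0$, $Y_0\sim\nu_0$ with $E\tilde V(X_0-Y_0)\le W_{\tilde V}(\mu_0,\nu_0)+\varepsilon$. Let $X_t,Y_t$ solve \eqref{eqSDE} with $\mathcal M=\{1\}$ driven by the same Brownian motion $W$. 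By the weak uniqueness furnished under (H1)--(H4), the marginals are $\mathcal L_{X_t}=P_t^*\mu_0=:\mu_t$ and $\mathcal L_{Y_t}=P_t^*\nu_0=:\nu_t$ irrespective of the initial correlation, so $\pi_t:=\mathcal L_{(X_t,Y_t)}\in\mathcal C(\mu_t,\nu_t)$. The generator of $Z_t:=X_t-Y_t$ acting on $\tilde V$ is precisely the operator $\tilde L\tilde V$ of (H5) evaluated at $(X_t,Y_t,\mu_t,\nu_t)$; hence It\^o's formula, taking expectations, and \eqref{ieqILC} applied to $\pi_t$ give
\begin{align*}
\frac{d}{dt}E\tilde V(X_t-Y_t)=\int_{\mathbb R^d\times\mathbb R^d}\tilde L\tilde V(x-y)\,\pi_t(dx,dy)\le -\gamma\,E\tilde V(X_t-Y_t).
\end{align*}
Gronwall's inequality yields $E\tilde V(X_t-Y_t)\le e^{-\gamma t}E\tilde V(X_0-Y_0)$, and since $\pi_t$ is an admissible coupling, $W_{\tilde V}(P_t^*\mu_0,P_t^*\nu_0)\le e^{-\gamma t}(W_{\tilde V}(\mu_0,\nu_0)+\varepsilon)$; letting $\varepsilon\downarrow0$ proves (i).

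For (ii), I would feed (i) into the semigroup identity $P_{t+s}^*=P_t^*P_s^*$. For $t'>t$, applying (i) with first argument $P_{t'-t}^*\nu_0$ and second argument $\nu_0$ gives
\begin{align*}
W_{\tilde V}(P_{t'}^*\nu_0,P_t^*\nu_0)=W_{\tilde V}\big(P_t^*(P_{t'-t}^*\nu_0),P_t^*\nu_0\big)\le e^{-\gamma t}W_{\tilde V}(P_{t'-t}^*\nu_0,\nu_0)\le Ce^{-\gamma t},
\end{align*}
with $C:=\sup_{s\ge0}W_{\tilde V}(P_s^*\nu_0,\nu_0)<\infty$ by \eqref{bounded}. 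Thus $\{P_t^*\nu_0\}$ is $W_{\tilde V}$-Cauchy as $t\to\infty$, and by completeness of $\mathcal P_{\tilde V}$ it converges to some $\mu_{\mathcal I}\in\mathcal P_{\tilde V}$. To obtain invariance, I fix $s\ge0$ and pass to the limit in $P_{s+t}^*\nu_0=P_s^*(P_t^*\nu_0)$: the left-hand side tends to $\mu_{\mathcal I}$ as a tail of the same convergent family, while the right-hand side tends to $P_s^*\mu_{\mathcal I}$ because (i) makes $P_s^*$ Lipschitz, hence continuous, for $W_{\tilde V}$; since $W_{\tilde V}$ separates points, limits are unique and $P_s^*\mu_{\mathcal I}=\mu_{\mathcal I}$. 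Finally, applying (i) with $\nu_0=\mu_{\mathcal I}$ gives $W_{\tilde V}(P_t^*\mu_0,\mu_{\mathcal I})=W_{\tilde V}(P_t^*\mu_0,P_t^*\mu_{\mathcal I})\le e^{-\gamma t}W_{\tilde V}(\mu_0,\mu_{\mathcal I})$ for every $\mu_0\in\mathcal P_{\tilde V}$, and this same contraction forces uniqueness of the invariant measure (two invariant measures have $W_{\tilde V}$-distance bounded by $e^{-\gamma t}$ times itself, hence zero).

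The step I expect to be most delicate is not any single estimate but the bookkeeping forced by $W_{\tilde V}$ being only a quasi-distance: it may be non-symmetric and may violate the triangle inequality. This is why in the Cauchy estimate I place the later-time iterate as the first argument, so that the right-hand side reads exactly $W_{\tilde V}(P_s^*\nu_0,\nu_0)$ and matches \eqref{bounded}, and why the invariance argument passes to the limit directly in the semigroup identity instead of inserting intermediate measures. The uniqueness of $W_{\tilde V}$-limits, which I invoke repeatedly, I would justify by noting that $\int\tilde V(x-y)\,\pi_n(dx,dy)\to0$ forces the near-optimal couplings $\pi_n$ to concentrate on the diagonal and therefore implies weak convergence of the marginals, whose limit is unique. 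A separate, routine point is the rigorous justification of the differential inequality in (i): the vanishing of the stochastic integral in expectation and the differentiation under the expectation should be carried out after localizing by $\tau_R=\inf\{t:|Z_t|\ge R\}$ and using the moment control of Theorem \ref{thlL1}, and then letting $R\to\infty$.
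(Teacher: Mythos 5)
Your proposal is correct and follows essentially the same route as the paper: for (i), a synchronous coupling started from a (near-)optimal coupling of $\mu_0,\nu_0$, It\^o's formula with \eqref{ieqILC} and Gronwall's inequality; for (ii), the Cauchy-sequence/completeness argument in $(\mathcal P_{\tilde V},W_{\tilde V})$ followed by passing to the limit in the semigroup identity and then invoking (i) for the rate and uniqueness. The only differences are cosmetic refinements: where you identify $P_s^*\mu_{\mathcal I}=\mu_{\mathcal I}$ via $W_{\tilde V}$-Lipschitz continuity of $P_s^*$ and uniqueness of $W_{\tilde V}$-limits, the paper uses lower semicontinuity of the transport cost (Villani's Lemma 4.2), and your use of an $\varepsilon$-optimal initial coupling, localization in It\^o's formula, and the ordering of arguments in the Cauchy estimate so that its right-hand side literally matches \eqref{bounded} are, if anything, slightly more careful than the paper's presentation.
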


\begin{proof}
(i). For any initial measures $\mu_0, \nu_0 \in \mathcal P_{\tilde V}(\mathbb R^d)$. Let $X_t$ and $Y_t$ be two solutions such that $\mathcal L_{X_0}= \mu_0, \mathcal L_{Y_0}= \nu_0$, and
$$
W_{\tilde V}(\mu_0, \nu_0)= E\tilde V(X_0- Y_0).
$$
Denote $\mu_t= \mathcal L_{X_t}, \nu_t= \mathcal L_{Y_t}$. By It\^o's formula and (H5), we have
\begin{align*}
E\tilde V(X_t- Y_t)
& =
E\tilde V(X_0- Y_0)+ E\int_0^t \tilde L\tilde V(X_s- Y_s) ds\\
& \leq
E\tilde V(X_0- Y_0)- \gamma E\int_0^t \tilde V(X_s- Y_s) ds.
\end{align*}
Applying Gronwall's inequality, we get
$$
E\tilde V(X_t- Y_t)
\leq
e^{-\gamma t}E\tilde V(X_0- Y_0).
$$
Thus,
$$
W_{\tilde V}(\mu_t, \nu_t)
\leq
e^{-\gamma t} W_{\tilde V}(\mu_0, \nu_0).
$$

(ii). We first prove that $\{P_t ^*\nu_0\}$ is a $W_{\tilde V}$--Cauchy sequence. Indeed, from (i) we know that
\begin{align*}
W_{\tilde V}\left( P_t^*\nu_0, P_{t+s}^*\nu_0\right)
\leq
e^{-\gamma t} W_{\tilde V}\left(\nu_0, P_{s}^*\nu_0\right).
\end{align*}
Thus, by \eqref{bounded} we obtain
\begin{align*}
\lim_{t\to \infty}\sup_{s\geq 0}W_{\tilde V}\left( P_t^*\nu_0, P_{t+s}^*\nu_0\right)=0.
\end{align*}

Since $\mathcal P_{\tilde V}$ is complete w.r.t. $W_{\tilde V}$, there exists a measure $\mu_{\mathcal I} \in \mathcal P_{\tilde V}$ such that
$$
\lim_{t\to \infty}W_{\tilde V}\left( P_t^*\nu_0, \mu_{\mathcal I} \right)=0.
$$
Consequently, by Lemma 4.2 in Villani \cite{Villani}, we have for any $t\ge 0$
\begin{align*}
W_{\tilde V}\left( P_t^*\mu_{\mathcal I} , \mu_{\mathcal I}\right)
\leq
\varliminf_{s \to \infty}W_{\tilde V}\left( P_t^*P_s^*\nu_0, \mu_{\mathcal I} \right) = 0.
\end{align*}
That is, $\mu_{\mathcal I} $ is an invariant measure. Therefore, by (i) for any $\mu_0\in \mathcal P_{\tilde V}$, we have
$$
W_{\tilde V}\left( P_t^*\mu_0, \mu_{\mathcal I} \right) \leq e^{-\gamma t} W_{\tilde V}\left( \mu_0, \mu_{\mathcal I}  \right).
$$
The proof is complete.
\end{proof}

\begin{remark}\rm
(i) The condition \eqref{bounded} means that there is a ``bounded orbit" in $\mathcal P _{\tilde V}$, which is necessary and natural because the system cannot have an invariant measure if any orbit is unbounded. Note by Theorem \ref{thinvariantmeasureofDDSDE}--(i) that existence of one ``bounded orbit" implies the boundedness of all the orbits in $\mathcal P _{\tilde V}$.

(ii) If inequality (\ref{ieqILC}) in (H5) is replaced by
\begin{align*}
\tilde L\tilde V( x-y )
\leq
-\gamma_1\tilde V(x-y)+ \gamma_2 W_{\tilde V}(\mu, \nu)
\end{align*}
with $\gamma_1>\gamma_2>0$, it is immediate to see that the results of Theorem \ref{thinvariantmeasureofDDSDE} are still valid.

(iii) By taking $\tilde V(\cdot)= |\cdot|^2$ in (H5), our result Theorem \ref{thinvariantmeasureofDDSDE} reduces to that of Hu \cite[Theorem 4.1]{Hu}, which in turn is a type of generalization of Wang \cite[Theorem 3.1]{wang_2018}.
Wang \cite{Wang} considered the exponential ergodicity under the Lyapunov and monotone conditions; note that the diffusion coefficient in \cite{Wang} requires to be non-degenerate and independent of the distribution, while our results do not need these assumptions.
\end{remark}

As a direct consequence of Theorem \ref{thinvariantmeasureofDDSDE}, we have
\begin{coro}\label{thinvariantmeasureunderTV}
Under the conditions of Theorem \ref{thinvariantmeasureofDDSDE}, for any measures $ \mu_0 ,\nu_0 \in \mathcal P _{\tilde V}$ we have
$$
\|P_t^*\mu_0- P_t^*\nu_0\|_{{\rm Var}, \tilde V} \to 0,~as~ t\to \infty.
$$
And there exists a unique invariant measure $\mu_{\mathcal I} \in \mathcal P_{\tilde V}$ such that for any measure $ \mu_0 \in \mathcal P _{\tilde V}$,
$$
\|P_t^*\mu_0- \mu_{\mathcal I}\|_{{\rm Var}, \tilde V} \to 0,~as~ t\to \infty.
$$
Here,
$$
\|\mu- \nu\|_{{\rm Var}, \tilde V}:= \sup_{|f|\leq \tilde V, f\in C_b} |\mu(f)- \nu(f)| \quad \hbox{for }\mu,\nu\in \mathcal P _{\tilde V}.
$$
\end{coro}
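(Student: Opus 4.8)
The plan is to transfer the $W_{\tilde V}$-contraction of Theorem~\ref{thinvariantmeasureofDDSDE} to the weighted total variation distance through the coupling already used there. For $\mu_0,\nu_0\in\mathcal P_{\tilde V}$ I would take solutions $(X_t,Y_t)$ driven by the same Brownian motion with $\mathcal L_{X_0}=\mu_0$, $\mathcal L_{Y_0}=\nu_0$ and $E\tilde V(X_0-Y_0)=W_{\tilde V}(\mu_0,\nu_0)$, so that the computation in part~(i) of that theorem gives
\begin{align*}
E\tilde V(X_t-Y_t)\le e^{-\gamma t}\,E\tilde V(X_0-Y_0)\longrightarrow 0,\qquad t\to\infty;
\end{align*}
for the second assertion I would simply let $Y_t$ be stationary with $\mathcal L_{Y_t}=\mu_{\mathcal I}$, and uniqueness of $\mu_{\mathcal I}$ is inherited from Theorem~\ref{thinvariantmeasureofDDSDE}(ii).

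First I would upgrade this $L^1$-smallness to smallness of $X_t-Y_t$ itself. Since $\tilde V$ is continuous with $\tilde V(z)=0$ iff $z=0$ and has bounded sublevel sets, $c_\eta:=\inf_{|z|\ge\eta}\tilde V(z)>0$ for each $\eta>0$, so Chebyshev's inequality yields $P(|X_t-Y_t|\ge\eta)\le c_\eta^{-1}E\tilde V(X_t-Y_t)\to 0$; hence $X_t-Y_t\to0$ in probability and, by continuity, $f(X_t)-f(Y_t)\to0$ in probability for every $f\in C_b$. For such an $f$ with $|f|\le\tilde V$ I then estimate, via the coupling,
\begin{align*}
|P_t^*\mu_0(f)-P_t^*\nu_0(f)|=|Ef(X_t)-Ef(Y_t)|\le E|f(X_t)-f(Y_t)|,
\end{align*}
and dominate the integrand by $\tilde V(X_t)+\tilde V(Y_t)$. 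The family $\{\tilde V(X_t)+\tilde V(Y_t)\}_{t\ge0}$ is uniformly integrable thanks to the uniform-in-time $\tilde V$-moment bounds available under (H2) and (H5) (for the stationary branch $E\tilde V(Y_t)=\mu_{\mathcal I}(\tilde V)$ is even constant), and crucially this domination does not involve $f$. Vitali's convergence theorem then gives $E|f(X_t)-f(Y_t)|\to0$ for each fixed $f$.

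The main obstacle is to make this convergence \emph{uniform} over the class $\{f\in C_b:|f|\le\tilde V\}$, as the supremum defining $\|\cdot\|_{\mathrm{Var},\tilde V}$ requires: convergence in probability of $X_t-Y_t$ controls each individual $f$ but not the oscillation of $f$ as $f$ varies. I would split, for a threshold $K>0$,
\begin{align*}
E|f(X_t)-f(Y_t)|\le E\big[(\tilde V(X_t)+\tilde V(Y_t))\mathbf 1_{\{\tilde V(X_t)+\tilde V(Y_t)>K\}}\big]+E\big[|f(X_t)-f(Y_t)|\,\mathbf 1_{\{\tilde V(X_t)+\tilde V(Y_t)\le K\}}\big],
\end{align*}
so that the first term is bounded by $\sup_{t\ge0}E[(\tilde V(X_t)+\tilde V(Y_t))\mathbf 1_{\{\cdots>K\}}]$, which is small for large $K$ uniformly in both $t$ and $f$. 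The remaining near-diagonal term, where $X_t,Y_t$ lie in a fixed bounded set and $X_t-Y_t\to0$, is the delicate point; here uniformity over $f$ is not implied by $W_{\tilde V}$-convergence alone and must be extracted from the smoothing/regularity of the marginals $\mathcal L_{X_t}$ (absolute continuity produced by the diffusion), equivalently by replacing the synchronous coupling with a coalescent one for which $P(X_t\ne Y_t)\to0$, so that already $E[(\tilde V(X_t)+\tilde V(Y_t))\mathbf 1_{\{X_t\ne Y_t\}}]\to0$ bounds $\|\cdot\|_{\mathrm{Var},\tilde V}$ uniformly.
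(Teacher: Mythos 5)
You have correctly located the crux, but you do not resolve it, so the proposal has a genuine gap. Your steps for a \emph{fixed} test function (synchronous coupling, Chebyshev, uniform integrability) are fine in spirit (modulo minor issues: boundedness of $E\tilde V(X_t)$ alone does not yield uniform integrability of $\tilde V(X_t)$, and (H5) does not force $\inf_{|z|\ge\eta}\tilde V(z)>0$), but the corollary is a statement about the supremum over all $f\in C_b$ with $|f|\le\tilde V$, and the two devices you offer for the final uniformity step --- regularity/absolute continuity of the marginal laws, or a coalescent coupling with $P(X_t\neq Y_t)\to 0$ --- are not available under the standing hypotheses: (H1)--(H5) allow degenerate and even vanishing diffusion coefficients (the paper stresses after Theorem \ref{thinvariantmeasureofDDSDE} that no nondegeneracy of $\sigma$ is assumed). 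Concretely, take $d=1$, $\mathcal M=\{1\}$, $b(x,\mu)=1-x$, $\sigma\equiv 0$, $\tilde V(z)=z^2$; then (H1)--(H5) hold with $\gamma=2$, the unique invariant measure is $\delta_1$, and $P_t^*\delta_0=\delta_{1-e^{-t}}$ is a Dirac mass distinct from $\delta_1$ for every $t$, so the two laws are mutually singular, every coupling satisfies $P(X_t\neq Y_t)=1$, and no smoothing occurs. Worse, in this example the envelope supremum does not tend to zero at all: choosing $f\in C_b$ with $|f|\le\tilde V$, $f(1-e^{-t})=(1-e^{-t})^2$ and $f(1)=-1$ (a steep admissible interpolation exists because both points stay away from the origin, where alone the envelope pins $f$), one gets $\|P_t^*\delta_0-\delta_1\|_{{\rm Var},\tilde V}\ge (1-e^{-t})^2+1\to 2$. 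So with the class $\{f\in C_b:|f|\le\tilde V\}$ read literally as a pointwise envelope, the step you are missing cannot be filled by any argument; your suspicion that uniformity over $f$ ``is not implied by $W_{\tilde V}$-convergence alone'' is not a technical worry but a fatal obstruction.

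The paper's own proof is entirely different and never returns to the coupling: it is a one-line appeal to Kantorovich duality, $\|\mu-\nu\|_{{\rm Var},\tilde V}\le\sup\{\nu(\phi)-\mu(\psi):\phi,\psi\in C_b,\ \phi-\psi\le\tilde V\}=W_{\tilde V}(\mu,\nu)$, after which Theorem \ref{thinvariantmeasureofDDSDE} finishes both assertions. That inequality is valid when the test functions are paired through the Kantorovich admissibility condition $f(x)-f(y)\le\tilde V(x-y)$, for which the bound $|\mu(f)-\nu(f)|\le\int\tilde V(x-y)\,\pi(dx,dy)$ holds along every coupling $\pi$; it is exactly this $\tilde V$-Lipschitz reading of the test class that makes the corollary true, and the example above shows such a restriction is necessary. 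In short: the paper bypasses your whole coupling-upgrade program with a duality bound that is essentially tautological for $\tilde V$-Lipschitz test functions, whereas your program, aimed at the larger pointwise-envelope class, identifies a real difficulty but cannot be completed --- for that larger class the conclusion itself fails under the paper's assumptions.
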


\begin{proof}
According to the Kantorovich duality (see e.g. \cite{Villani}), we have
\begin{align*}
\|\mu- \nu\|_{{\rm Var}, \tilde V} \leq \sup_{\phi, \psi \in C_b, \atop \phi- \psi\leq \tilde V}(\nu(\phi)- \mu(\psi))= W_{\tilde V}(\mu, \nu)
\end{align*}
for any $\mu,\nu \in \mathcal P_{\tilde V}$. Combining this with Theorem \ref{thinvariantmeasureofDDSDE}, the result immediately follows.
\end{proof}

\subsection{The case of MVSDEs with switching}
Next, we consider MVSDEs with Markovian switching, i.e. $\mathcal M =\{1,2,...,m\}$. For each fixed environment $i\in \mathcal M$, the corresponding diffusion process $X_t^{(i)}$ is defined by
$$
dX^{(i)}_t = b(t, X^{(i)}_t, \mathcal L_{X_t}, i )dt+ \sigma(t, X^{(i)}_t, \mathcal L_{X_t}, i )dW_t.
$$
Note that it should be $\mathcal L_{X_t}$ instead of $\mathcal L_{X^{(i)}_t}$ in above equation. Let $Y_t^{(i)}$ be defined the same as $X_t^{(i)}$
and denote by $\tilde L^{(i)}$ the infinitesimal generator of $X_t^{(i)}-Y_t^{(i)}$, i.e. for any twice continuously differentiable function $f:\mathbb R^d \to \mathbb R^+ $
\begin{align*}
\tilde L^{(i)} f(x-y):=&(b(t, x, \mu, i)-b(t,y, \nu, i))\nabla f(x-y) + \frac{1}{2}tr(\nabla^2 f(x-y) A(t,x, y, \mu, \nu,i))
\end{align*}
with $A(t,x, y, \mu, \nu,i)=(\sigma(t, x, \mu, i)-\sigma(t,y,\nu,i))(\sigma(t, x, \mu, i)-\sigma(t,y,\nu,i))^\top$.
\begin{enumerate}\label{THMIPM}
\item [(H6)](Integrable Lyapunov condition)
There exists a function $  \hat V:\mathbb R^d \to \mathbb R^{+}$, which is twice continuously differentiable with respect to $ x\in \mathbb R^d$, $ \hat V(x)=0$ iff $x=0$ and $\hat V(x-y)\leq K\max \{ \hat V(x), \hat V(y)\}$ for some constant $K$
and all $x,y\in\mathbb R^d$, such that there is a constant $ \theta> 0 $ satisfying  for each $\pi \in \mathcal C(\mu, \nu), i\in \mathcal M$,
\begin{align}\label{ieqILCS}
\int_{\mathbb R^d \times \mathbb R^d}\tilde L^{(i)}\hat V( x-y )\pi(dx, dy)
& \leq -\theta \int_{\mathbb R^d \times \mathbb R^d}\hat V(x-y)\pi(dx, dy).
\end{align}
\end{enumerate}

Let
\begin{align*}
&d((x,i), (y,j)):= \sqrt {\mathcal\mathbb 1_{i \neq j}+ \hat V(x- y)},~x,y \in \mathbb R^d, ~i,j \in \mathcal M,\\
&\mathcal P_d:=\left\{ \mu \in \mathcal P(\mathbb R^d \times \mathcal M): \int_{\mathbb R^d \times \mathcal M} d((x,i), (0,1))\mu(dx\times \{i\})< \infty \right\}.
\end{align*}
Define a Wasserstein quasi-distance on $\mathcal P_d$ by
$$
W_d(\mu, \nu):= \inf E d(X,Y )\quad \hbox {for }\mu, \nu \in \mathcal P_d,
$$
where the infimum is taken over all random variables $X, Y$ on $\mathbb R^d \times \mathcal M$ whose laws are $\mu, \nu$ respectively. It is complete in the space $\mathcal P_d$, i.e. any $W_d$--Cauchy sequence in $\mathcal P_d$ converges w.r.t. $W_d$.
Note that $W_d$ is a distance on $\mathcal P_d$ when $d$ is a distance on $\mathbb R^d \times \mathcal M$. In particular, when the mapping $(x,y)\mapsto \hat V(x-y)$ is a distance on $\mathbb R^d$, $d$ is a distance on $\mathbb R^d \times \mathcal M$
and $\hat V(x-y)\leq 2\max \{ \hat V(x), \hat V(y)\}$.


Denote
\begin{align*}
\mathcal P_{\hat V}:=\left\{ \mu \in \mathcal P(\mathbb R^d \times \mathcal M): \int_{\mathbb R^d \times \mathcal M}  \hat V(x) \mu(dx\times \{i\})< \infty \right\}.
\end{align*}

\begin{theorem}\label{thinvariantmeasure}
Assume that (H1)--(H4) and (H6) hold, $Q(x) \equiv Q$ and for any $\nu \in \mathcal P_{\hat V}$ we have $\sup_{t\ge 0}(P_t ^* \nu)(\hat V)< \infty$.
Then there exists a constant $\tilde\theta>0$ such that for any initial measures $ \mu_0 ,\nu_0 \in \mathcal P_{\hat V}$ we have
$$
W_d \left( P_t^*\mu_0, P_t^*\nu_0 \right) \leq Ce^{-\tilde\theta t}, \quad  t\ge 0
$$
for some constant $C=C(\mu_0,\nu_0)$. In particular, if the coefficients $b$ and $\sigma$ are independent of $t$, then there exists a unique invariant measure $\mu_{\mathcal I} \in \mathcal P_{\hat V}$ such that
for any $\mu_0 \in \mathcal P_{\hat V}$ we have
$$
W_d\left( P_t^*\mu_0, \mu_{\mathcal I} \right) \leq Ce^{-\tilde\theta t}, \quad t \geq 0
$$
with $C=C(\mu_0)$.
\end{theorem}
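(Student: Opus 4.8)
The plan is to run a coupling argument that handles the continuous and the discrete components separately, and then to extract the invariant measure exactly as in Theorem \ref{thinvariantmeasureofDDSDE}(ii). Existence and uniqueness of the nonlinear semigroup $P_t^*$ on $\mathcal P_{\hat V}$ are guaranteed by (H1)--(H4) via Theorem \ref{thlL1}. I would fix $\mu_0,\nu_0\in\mathcal P_{\hat V}$ and build a single coupling $(X_t,\alpha_t,Y_t,\beta_t)$ with $\mathcal L_{(X_0,\alpha_0)}=\mu_0$, $\mathcal L_{(Y_0,\beta_0)}=\nu_0$ in which $X,Y$ are driven by the \emph{same} Brownian motion $W$ while the pair $(\alpha,\beta)$ runs by the basic coupling $\tilde Q$ introduced in the proof of Theorem \ref{thlL1} (a genuine coupling, since both the common $W$ and the basic coupling preserve the individual marginal laws). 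The decisive simplification from $Q(x)\equiv Q$ is that this $\tilde Q$ is state-independent, so $(\alpha_t,\beta_t)$ is an \emph{autonomous} finite-state Markov chain on $\mathcal M\times\mathcal M$ whose diagonal is absorbing. Testing $W_d$ against this coupling and using Jensen's inequality for the concave map $\sqrt{\cdot}$,
\begin{align*}
W_d(P_t^*\mu_0,P_t^*\nu_0)\le E\,d((X_t,\alpha_t),(Y_t,\beta_t))\le\sqrt{P(\alpha_t\ne\beta_t)+E\hat V(X_t-Y_t)} ,
\end{align*}
so the theorem reduces to the exponential decay of $P(\alpha_t\ne\beta_t)$ and of $u(t):=E\hat V(X_t-Y_t)$.

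The discrete factor is the routine half: $(\alpha_t,\beta_t)$ is a continuous-time chain on the finite set $\mathcal M\times\mathcal M$ with constant rates and absorbing diagonal, so (assuming, as is implicitly needed for a unique invariant measure, that $Q$ is irreducible) it is absorbed into the diagonal in finite expected time uniformly over starting states, whence $P(\alpha_t\ne\beta_t)\le C_1 e^{-\kappa t}$ for some $C_1,\kappa>0$. This is precisely where $Q(x)\equiv Q$ is indispensable: for $x$-dependent $Q$ the discrete coupling would be entangled with $(X,Y)$ and no such autonomous coalescence would be available.

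For $u(t)$ I would apply the generalized It\^o formula to $\hat V(X_t-Y_t)$ and split the drift according to whether the environments agree,
\begin{align*}
u'(t)=E\big[\mathbf{1}_{\alpha_t=\beta_t}\,\tilde L^{(\alpha_t)}\hat V(X_t-Y_t)\big]+E\big[\mathbf{1}_{\alpha_t\ne\beta_t}\,\tilde L^{(\alpha_t,\beta_t)}\hat V(X_t-Y_t)\big] ,
\end{align*}
where $\tilde L^{(i,j)}$ is the generator of $X_t-Y_t$ when $X$ sits in environment $i$ and $Y$ in environment $j$ (so $\tilde L^{(i,i)}=\tilde L^{(i)}$). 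On the coalesced event one wants (H6), applied to the coupling $\mathcal L(X_t,Y_t)$ of $\mu_t,\nu_t$, to contribute the dissipative rate $-\theta$, i.e. $-\theta E[\mathbf{1}_{\alpha_t=\beta_t}\hat V(X_t-Y_t)]$. The mismatch term vanishes off $\{\alpha_t\ne\beta_t\}$; there I would isolate the environment-switch increments $b(t,y,\nu_t,\alpha_t)-b(t,y,\nu_t,\beta_t)$ and their $\sigma$-analogue, dominate the resulting growth using the subadditivity $\hat V(x-y)\le K\max\{\hat V(x),\hat V(y)\}$ together with the standing bounds $\sup_t(P_t^*\mu_0)(\hat V),\sup_t(P_t^*\nu_0)(\hat V)<\infty$, and turn the indicator $\mathbf{1}_{\alpha_t\ne\beta_t}$ into the exponentially small factor of the previous paragraph via Cauchy--Schwarz. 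This yields $u'(t)\le-\theta u(t)+R(t)$ with $R(t)\le C e^{-\kappa' t}$, and variation of constants gives $u(t)\le C(\mu_0,\nu_0)\,(1+t)e^{-\min\{\theta,\kappa'\}t}$; feeding this back into the displayed bound produces $W_d(P_t^*\mu_0,P_t^*\nu_0)\le C e^{-\tilde\theta t}$ for a suitable $\tilde\theta>0$. I expect the coalesced term to be the main obstacle: (H6) is an inequality for couplings of the \emph{fixed} pair $(\mu_t,\nu_t)$, whereas restricting to $\{\alpha_t=\beta_t=i\}$ yields a sub-probability measure whose marginals are the conditional laws rather than $\mu_t,\nu_t$. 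Making the passage to the rate $-\theta$ rigorous---showing that the McKean--Vlasov coefficients frozen at the unconditional laws still give $-\theta$ up to an error absorbed by the exponentially small coalescence defect---is the delicate point on which the decay of $u(t)$ rests.

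Finally, the invariant-measure statement follows as in Theorem \ref{thinvariantmeasureofDDSDE}(ii). In the time-homogeneous case the estimate gives $W_d(P_t^*\nu_0,P_{t+s}^*\nu_0)\le C(\nu_0,P_s^*\nu_0)e^{-\tilde\theta t}$, and since $\sup_s C(\nu_0,P_s^*\nu_0)<\infty$ by the uniform $\hat V$-moment bound, $\{P_t^*\nu_0\}$ is $W_d$-Cauchy. Completeness of $(\mathcal P_d,W_d)$ yields a limit $\mu_{\mathcal I}\in\mathcal P_{\hat V}$, its invariance follows from the lower semicontinuity of $W_d$ (as in \cite[Lemma 4.2]{Villani}), and both the uniqueness of $\mu_{\mathcal I}$ and the convergence $W_d(P_t^*\mu_0,\mu_{\mathcal I})\le Ce^{-\tilde\theta t}$ are read off from the main estimate.
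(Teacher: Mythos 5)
Your overall architecture --- synchronous Brownian coupling, a coupling of the chains that coalesces exponentially fast (you are right that $Q(x)\equiv Q$ is the decisive point, and also right that irreducibility of $Q$ is implicitly needed, a hypothesis the paper leaves unstated), dissipation via (H6) for the continuous component, and a Cauchy-sequence/completeness argument for the invariant measure --- is the same as the paper's. The genuine gap is in how you propagate the dissipation through the phase where $\alpha_t\ne\beta_t$. You propose the differential inequality $u'(t)\le-\theta u(t)+R(t)$ for $u(t)=E\hat V(X_t-Y_t)$, where $R(t)$ collects the mismatch term $E\bigl[\mathbf{1}_{\alpha_t\ne\beta_t}\,\tilde L^{(\alpha_t,\beta_t)}\hat V(X_t-Y_t)\bigr]$, to be made exponentially small via Cauchy--Schwarz against $P(\alpha_t\ne\beta_t)$. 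But nothing in (H1)--(H4), (H6) or the moment hypothesis controls the cross-environment generator $\tilde L^{(i,j)}$ with $i\ne j$: it contains $\bigl(b(t,x,\mu_t,i)-b(t,y,\nu_t,j)\bigr)\nabla\hat V(x-y)$ plus the analogous second-order term, and these are not dominated by $\hat V$ or by $\hat V$-moments --- the coefficients are only locally Lipschitz and locally bounded (in Example 5.1 the drift grows like $-x^3$, so this term would require higher-order moment information that is simply not assumed). Worse, the Cauchy--Schwarz step needs a \emph{second} moment of this generator term. So the bound $R(t)\le Ce^{-\kappa' t}$ cannot be established under the stated assumptions, and the decay of $u(t)$, on which your whole estimate rests, does not follow.

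The paper's proof is organized precisely so that no generator is ever written down during the mismatch phase. With $\tau:=\inf\{t\ge0:\alpha_t=\tilde\alpha_t\}$ the coalescence time, it splits $E\,d((X_t,\alpha_t),(Y_t,\tilde\alpha_t))$ over $\{\tau>t/2\}$ and $\{\tau\le t/2\}$. On $\{\tau>t/2\}$ it uses the square-root structure of $d$ and applies Cauchy--Schwarz at the level of the metric itself, $E\bigl[\sqrt{1+\hat V(X_t-Y_t)}\,\mathbf{1}_{\tau>t/2}\bigr]\le\sqrt{P(\tau>t/2)}\cdot\sqrt{1+K\max\{E\hat V(X_t),E\hat V(Y_t)\}}$, which consumes only the assumed uniform $\hat V$-moment bound --- no It\^o formula at all. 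On $\{\tau\le t/2\}$ it restarts the equal-chains estimate at time $\tau$, i.e. $E(\hat V(X_t-Y_t)\mid\mathcal F_\tau)\le\hat V(X_\tau-Y_\tau)e^{-\theta(t-\tau)}$, so (H6) is only ever invoked along stretches where the two chains agree identically; this gives $\tilde\theta=\frac14(\theta\wedge\theta_c)$ directly. (The ``delicate point'' you honestly flag --- that (H6) concerns couplings of the unconditional laws, while conditioning on the chain configuration produces sub-couplings of conditional laws --- is a real subtlety, but it is present in the paper's argument too and is not what distinguishes the two proofs; what makes your route fail and the paper's succeed is the treatment of the mismatch phase.) Your part (ii) (Cauchy sequence from the uniform moment bound, completeness of $(\mathcal P_d,W_d)$, lower semicontinuity for invariance) coincides with the paper's.
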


\begin{proof}

(i). Suppose that $(X_t, \alpha_t)$ and $(Y_t, \tilde\alpha_t)$ are solutions whose initial distributions are $\mu_0, \nu_0 $ respectively. Denote $\mu_t= \mathcal L_{(X_t,\alpha_t)}, \nu_t= \mathcal L_{(Y_t,\tilde\alpha_t)}$.

1. We first consider the special case $ \alpha_0= \tilde\alpha_0~ a.s.$ Then $\alpha_t= \tilde\alpha_t~ a.s.$ In this case, by (H6) and It\^o's formula we get
\begin{align*}
&E\hat V(X_t- Y_t)\\
= &
E\hat V(X_0- Y_0)+ E\int_0^t \tilde L^{(\alpha_s)}\hat V(X_s- Y_s) ds\\
\leq &
E\hat V(X_0- Y_0)- \theta E\int_0^t\hat V(X_s- Y_s) ds.
\end{align*}
Applying Gronwall's inequality, we obtain
\begin{align}\label{dissi}
E\hat V(X_t- Y_t)\leq  E\hat V(X_0- Y_0)e^{-\theta t}.
\end{align}
Thus, by Jensen's inequality we have
\begin{align*}
 W_d( \mu_t, \nu_t)
& \leq Ed((X_t, \alpha_t),(Y_t, \tilde\alpha_t))=
E \sqrt {\hat V(X_t- Y_t)}\\
& \leq
\sqrt {E\hat V(X_t- Y_t)}\leq
\sqrt {E\hat V(X_0- Y_0 )e^{-\theta t}}\\
& \le
e^{-\frac{\theta t}{2}}\sqrt {KE\hat V(X_0)+ KE\hat V(Y_0 )}.
\end{align*}
2. If $\alpha_0 = \tilde\alpha_0 ~a.s.$  does not hold, define $\tau := \inf \left\{ t\geq 0: \alpha_t= \tilde\alpha_t \right\}$. Recall that if $\alpha_t$ and $\tilde\alpha_t$ are two independent finite-state Markov chains with generator $Q$, then there exist constants $C_c, \theta_c>0$ such that
\begin{align*}
P(\tau>t) \leq C_c e^{-\theta_c t},~\forall t\geq 0.
\end{align*}
Thus, by H\"older's inequality, Jensen's inequality and \eqref{dissi} there exists a constant $C>0$ such that
\begin{align*}
&E d((X_t, \alpha_t), (Y_t, \tilde\alpha_t))\\
= &
E \left( \sqrt{\mathbb\mathcal 1_{\alpha_t \neq \tilde\alpha_t}+ \hat V(X_t- Y_t)}\cdot \mathbb\mathcal 1_{\tau> \frac{t}{2}}\right)
+ E \left(\sqrt { \hat V(X_t- Y_t)} \cdot \mathbb\mathcal 1_{\tau\leq \frac{t}{2}}\right)\\
\leq &
\sqrt {P(\tau> \frac{t}{2})} \cdot \sqrt{E(1+ \hat V(X_t- Y_t))}+ \sqrt { E(\hat V(X_t- Y_t) \mathbb\mathcal 1_{\tau\leq \frac{t}{2}})}\\
\leq &
\sqrt {C_c} e^{-\frac{\theta_c t}{4}} \sqrt{1+ K\max \{E\hat V(X_t),E\hat V( Y_t)\}}+ \sqrt { E(E(\hat V(X_t- Y_t)|\mathcal F_{\tau}) \mathbb\mathcal 1_{\tau\leq \frac{t}{2}})}\\
\leq &
\sqrt {C_c} e^{-\frac{\theta_c t}{4}} \sqrt{1+ K\max \{E\hat V(X_t),E\hat V( Y_t)\}}+ \sqrt {E\hat V(X_{\tau}- Y_{\tau}) e^{-\frac{\theta}{2}t} }\\
\leq &
C e^{-\tilde \theta t},
\end{align*}
where $\tilde \theta = \frac14(\theta \wedge \theta_c)$.

(ii). The proof is completely similar to that of Theorem \ref{thinvariantmeasureofDDSDE}--(ii), so we omit it.
\end{proof}

\begin{remark}\rm
We have the following comments on Theorem \ref{thinvariantmeasure}.

(i) If inequality (\ref{ieqILCS}) in (H6) is replaced by
\begin{align*}
\tilde L^{(i)}\hat V( x-y )
\leq
-\theta_1\hat V(x-y)+ \theta_2 W_{\hat V}(\mu, \nu)
\end{align*}
with $\theta_1>\theta_2>0$, the results are still valid.

(ii) The condition $\sup_{t\ge 0}(P_t ^* \nu)(\hat V)< \infty$ for any $\nu \in \mathcal P_{\hat V}$ means all the orbits in $\mathcal P_{\hat V}$ are bounded, which is natural and necessary to guarantee that the system is ergodic in $\mathcal P_{\hat V}$.

(iii) We assume $\mu_0\in \mathcal P_{\hat V}$ instead of $\mu_0\in \mathcal P_d$ since $\mu_0\in \mathcal P_d$ does not guarantee $\mu_0(\hat V)<\infty$.

(iv) In \cite{YZ}, Yin and Zhu showed the ergodicity for SDE with Markovian switching using the classical Khasminskii's method. But in the present paper, the solution of MVSDE with Markovian switching is not strong Markovian, so the classical Khasminskii's method does not apply; on the other hand, the corresponding Fokker-Planck equation is nonlinear, so the classcial Krylov-Bogolyubov argument for the existence of invariant measures is invalid, either.
\end{remark}

As a direct consequence of Theorem \ref{thinvariantmeasure}, we have the following corollary whose proof is omitted since it is similar to that of Corollary \ref{thinvariantmeasureunderTV}.
\begin{coro}\label{thinvariantmeasurestichingunderTV}
Under the conditions of Theorem \ref{thinvariantmeasure}, for any measures $ \mu_0 ,\nu_0 \in \mathcal P _{\hat V}$, we have
$$
\|P_t^*\mu_0- P_t^*\nu_0\|_{{\rm Var}, d} \to 0,~~as~ t\to \infty.
$$
And there exist a unique invariant measure $\mu_{\mathcal I} \in \mathcal P_{\hat V}$ such that for any measure $ \mu_0 \in \mathcal P _{\hat V}$,
$$
\|P_t^*\mu_0- \mu\|_{{\rm Var}, d} \to 0,~as~ t\to \infty.
$$
Here $\|\mu- \nu\|_{{\rm Var}, d}:= \sup_{|f|\leq d, f\in C_b} |\mu(f)- \nu(f)|$ for $\mu,\nu \in \mathcal P_d$.
\end{coro}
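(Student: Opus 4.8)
The plan is to reduce both convergence statements, which are phrased in the weighted total variation distance $\|\cdot\|_{{\rm Var},d}$, to the exponential decay in the Wasserstein quasi-distance $W_d$ already supplied by Theorem \ref{thinvariantmeasure}, exactly mirroring the proof of Corollary \ref{thinvariantmeasureunderTV}. The bridge is the Kantorovich(--Rubinstein) duality applied to the cost $d$ on the hybrid state space $\mathbb R^d\times\mathcal M$. Once the single comparison inequality between the two distances is in place, the two assertions will be immediate consequences of the estimates in Theorem \ref{thinvariantmeasure}.

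First I would establish the inequality
$$
\|\mu-\nu\|_{{\rm Var},d}\le W_d(\mu,\nu),\qquad \mu,\nu\in\mathcal P_d.
$$
Fix $f\in C_b(\mathbb R^d\times\mathcal M)$ with $|f|\le d$, which I read as the one-Lipschitz bound $|f(x,i)-f(y,j)|\le d((x,i),(y,j))$ over all pairs. Then the pair $\phi=\psi=f$ satisfies the dual constraint $\phi-\psi\le d$ (i.e. $f(x,i)-f(y,j)\le d((x,i),(y,j))$), so by the Kantorovich duality (see Villani \cite{Villani}) one gets $\nu(f)-\mu(f)\le\sup_{\phi-\psi\le d}(\nu(\phi)-\mu(\psi))=W_d(\mu,\nu)$. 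Since $-f$ is also one-Lipschitz for $d$, the same argument bounds $\mu(f)-\nu(f)$ by $W_d(\mu,\nu)$; taking the supremum over all admissible $f$ yields the displayed inequality. Here I rely on $d$ being a nonnegative, lower semicontinuous, distance-like cost, which is precisely the regularity under which the duality theorem holds, even though the triangle inequality for $d$---and hence symmetry of $W_d$---may fail.

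With this inequality in hand the corollary follows at once. For the first claim, Theorem \ref{thinvariantmeasure} gives $W_d(P_t^*\mu_0,P_t^*\nu_0)\le Ce^{-\tilde\theta t}$, whence
$$
\|P_t^*\mu_0-P_t^*\nu_0\|_{{\rm Var},d}\le W_d(P_t^*\mu_0,P_t^*\nu_0)\le Ce^{-\tilde\theta t}\to 0.
$$
For the second claim, in the time-homogeneous case Theorem \ref{thinvariantmeasure} furnishes the unique invariant measure $\mu_{\mathcal I}\in\mathcal P_{\hat V}$ together with $W_d(P_t^*\mu_0,\mu_{\mathcal I})\le Ce^{-\tilde\theta t}$, so the same comparison gives $\|P_t^*\mu_0-\mu_{\mathcal I}\|_{{\rm Var},d}\le Ce^{-\tilde\theta t}\to 0$.

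The only genuinely nontrivial point---and the main obstacle---is the first step: confirming that the weighted total variation test class $\{f\in C_b:|f|\le d\}$ is subsumed into the admissible set of the Kantorovich dual, and that the duality is valid for the quasi-distance cost $d$ on $\mathbb R^d\times\mathcal M$ (in particular that the relevant relation is the inequality $\|\mu-\nu\|_{{\rm Var},d}\le W_d(\mu,\nu)$ rather than an equality, and that the possible lack of symmetry of $W_d$ is handled by treating $f$ and $-f$ separately). Everything beyond this is a direct transcription of the argument for Corollary \ref{thinvariantmeasureunderTV}.
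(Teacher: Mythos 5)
Your proposal is correct and takes essentially the same route as the paper: the paper omits the proof of this corollary by pointing to Corollary \ref{thinvariantmeasureunderTV}, whose argument is precisely your Kantorovich-duality comparison $\|\mu-\nu\|_{{\rm Var},d}\le W_d(\mu,\nu)$ followed by the exponential decay supplied by Theorem \ref{thinvariantmeasure}. Your extra care in reading $|f|\le d$ as a Lipschitz-type constraint and in treating $f$ and $-f$ separately (to cover the possible asymmetry of $W_d$) only makes explicit what the paper leaves implicit.
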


\section{Applications}

In this section, we provide two examples to illustrate our results.
\begin{example}
For each $x\in \mathbb R$, $\mu \in \mathcal P (\mathbb R)$ and $i\in \{1,2\}$, consider
\begin{align*}
b(x, \mu, 1)&= -x^3 -2\int_{\mathbb R}(x+ \beta y)\mu(dy),~ b(x, \mu, 2)= -2x ,\\
\sigma(x, \mu, 1)&= \int_{\mathbb R}(x+ \beta y)\mu(dy),~\sigma(x, \mu, 2)= x ,
\end{align*}
where $\beta \in \mathbb R$. Then the following results hold:
(i) the SDE
$$
dX_t = b( X_t, \mathcal L_{X_t}, \alpha_t )dt+ \sigma( X_t, \mathcal L_{X_t}, \alpha_t )dW_t
$$
has a unique solution for any $\beta \in \mathbb R$ and when $ E|X_0|^2 <\infty$ we have
$$
E|X_t|^2 \leq  e^{(-2+ 2\beta^2)t}E|X_0|^2, \quad \hbox{for } t\ge 0.
$$
(ii) If there is no switching and $\beta \in(-1,1)$, there exists a unique invariant measure to which the solutions' distributions are exponentially convergent under $W_2$ and $\|\cdot\|_{{\rm Var}, |\cdot|^2}$. Moreover, if the switching's generator is state-independent and $\beta \in(-1,1)$, there exists a unique invariant measure to which the solutions' distributions are exponentially convergent under $W_d$ and $\|\cdot\|_{{\rm Var},d}$, where $d((x,i)(y,j))= \sqrt {\mathcal\mathbb 1_{i\neq j}+ |x-y|^2}$ for $x,y\in \mathbb R, ~ i,j\in \{1,2\}$.
\end{example}

\begin{proof}
(i) It is immediate to see that (H1), (H3) and (H4) hold. We now check the assumptions (H2), (H5) and (H6). Let $V(x, i)= x^2$ for $x\in\R$ and $i=1,2$. By It\^o's formula and Cauchy-Schwarz inequality, we have
\begin{align*}
 LV(x, 1)
& =  \sigma^2( x, \mu, 1 )+ b( x, \mu, 1 )\cdot 2x \\
& \leq (\int_{\mathbb R}(x+ \beta y)\mu(dy))^2 - 4x \int_{\mathbb R}(x+ \beta y)\mu(dy)\\
& = -3x^2 -2\beta x\int_{\mathbb R} y \mu(dy) + \beta^2(\int_{\mathbb R} y\mu(dy))^2\\
& \leq -2 x^2 +2\beta^2 \int_{\mathbb R} x^2 \mu(dx),
\end{align*}
And in the same way we get
$$
 LV(x, 2)= -3x^2.
$$
Thus, in this example, $\varphi(x)= V(x)= x^2$ for $x\in\mathbb R$, $\lambda_1= -2,~\lambda_2= 2\beta^2$, i.e. (H2) holds. Therefore, by Theorem \ref{thlL1} there exists a unique solution $(X_t,\alpha_t)$ and we have
$$
E|X_t|^2 \leq  e^{(-2+ 2\beta^2)t}E|X_0|^2,\quad \hbox{for } t\ge 0.
$$
(ii)
By It\^o's formula and Cauchy-Schwarz inequality, we have
\begin{align*}
\int_{\mathbb R \times \mathbb R}  \tilde L^{(1)}V(x- y) \pi(dx, dy)
\leq & -2\int_{\mathbb R \times \mathbb R}  |x-y|^2 \pi(dx, dy)\\
\quad & + 2\beta^2|\int_{\mathbb R} x\mu(dx)- \int_{\mathbb R} y\nu(dy)|^2\\
\leq & (-2+ 2\beta^2)\int_{\mathbb R \times \mathbb R}  |x-y|^2 \pi(dx, dy),\\
\tilde L^{(2)}V(x- y)
= & 2\langle -2x+ 2y, x-y\rangle + |x-y|^2\\
= & -3|x-y|^2.
\end{align*}
Thus, $\tilde V(x)=\hat V(x)=x^2$ for $x\in \mathbb R$ and $\gamma=\theta=2-2\alpha^2$, i.e. (H5) and (H6) hold. Therefore, by Theorem \ref{thinvariantmeasureofDDSDE}, Corollary \ref{thinvariantmeasureunderTV}, Theorem \ref{thinvariantmeasure} and Corollary \ref{thinvariantmeasurestichingunderTV} we get the desired results.
\end{proof}

\begin{example}
Assume that for each $x\in \mathbb R$, $\mu \in \mathcal P (\mathbb R)$ and $i\in \{1,2\}$, \begin{align*}
b(x, \mu, 1)&= -x^3 -x,~ b(x, \mu, 2)= -\frac{1}{2} x,\\
\sigma(x, \mu, 1)&= \int_{\mathbb R} x \mu(dx),~ \sigma(x, \mu, 2)= x+2\int_{\mathbb R} x \mu(dx).
\end{align*}
$\alpha(t)$ is a two-state random jump process with $x$-dependent generator
$$
\begin{gathered}
\begin{pmatrix}
-\frac{1}{3}-\frac{1}{4}\cos x &  \frac{1}{3}+\frac{1}{4}\cos x\\
\frac{7}{3}+\frac{1}{2}\sin x &  -\frac{7}{3}-\frac{1}{2}\sin x\\
\end{pmatrix}
\end{gathered}.
$$
Then the following results hold: (i) there exists a unique solution $(X_t,\alpha_t)$ and when $ E|X_0| <\infty$ we have
$$
E|X_t| \leq  e^{-\frac{5}{12}t}E|X_0|, \quad \hbox{for } t\ge 0.
$$
(ii) When there is no switching, there exists a unique invariant measure to which the solutions' distributions are exponentially convergent under $W_1$ and $\|\cdot\|_{{\rm Var}, |\cdot|}$. When the generator of switching is state-independent, we obtain a unique invariant measure to which the solutions' distributions are exponentially convergent under $W_d$ and $\|\cdot\|_{{\rm Var}, d}$ where $d((x,i)(y,j))= \sqrt {\mathcal\mathbb 1_{i\neq j}+ |x-y|}$ for $x,y\in \mathbb R, ~ i,j\in \{1,2\}$.

\end{example}

\begin{proof}
(i) The coefficients $b$ and $\sigma$ clearly satisfy (H1), (H3) and (H4). Consider the Lyapunov function
$$
V(x,1 )=|x|, V(x,2 )=2|x|
$$
for $x\in \mathbb R$.
Then we have
\begin{align*}
 LV( x ,1 )
& =  {\rm sign} x \cdot( -x^3 -x ) + ( \frac{1}{3}+\frac{1}{4}\cos x )( 2- 1 )|x|\\
& \leq -|x| + \frac{7}{12} |x| =  -\frac{5}{12} |x|=  -\frac{5}{12}V( x ,1 ),\\
LV(x,2 )
& =  2 {\rm sign} x \times (-\frac{1}{2} x) + ( \frac{7}{3}+\frac{1}{2}\sin x )(1- 2 )|x|\\
& \leq - |x|-\frac{11}{6} |x|=-\frac{17}{12}V(x,2 ).
\end{align*}
Thus, in this example, $\varphi(x)= |x|$ for $x\in \mathbb R$, $\lambda_1= -\frac{5}{12},~\lambda_2= 0$, i.e. (H2) holds. Therefore, by Theorem \ref{thlL1}, there exists a unique solution $(X_t,\alpha_t)$ and we have
$$
E|X_t| \leq  e^{-\frac{5}{12}t}E|X_0|, \quad \hbox{for } t\ge 0.
$$

(ii) Let $\tilde V(x)= \hat V(x)= |x|$ for $x\in \mathbb R$. In the same way, we obtain
\begin{align*}
\tilde L^{(1)}\tilde V( x-y )
&= {\rm sign} (x-y) \cdot( -x^3 -x + y^3 +y) \\
& \leq  - |x-y|=  -\tilde V( x -y),\\
\tilde L^{(2)}\tilde V( x-y )&= {\rm sign} \left(x-y\right)(-\frac{1}{2} x +\frac{1}{2} y) \\
&=-\frac{1}{2}|x-y|=-\frac{1}{2}\tilde V(x-y),
\end{align*}
where $\pi\in \mathcal C(\mu, \nu)$.
Thus, $\gamma= 1, \theta= \frac{1}{2}$, i.e. (H5) and (H6) hold. Therefore, by Theorem \ref{thinvariantmeasureofDDSDE}, Corollary \ref{thinvariantmeasureunderTV}, Theorem \ref{thinvariantmeasure} and Corollary \ref{thinvariantmeasurestichingunderTV}, we get the desired results.
\end{proof}

\section*{Acknowledgements}

This work is partially supported by NSFC Grants 11871132, 11952102, Dalian High-level Talent Innovation Project (Grant 2020RD09), and Xinghai Jieqing fund from Dalian University of Technology.

\end{document}